\documentclass[a4paper,12pt]{amsart}
\usepackage[utf8]{inputenc}
\usepackage[english]{babel}

\renewcommand{\geq}{\geqslant}
\renewcommand{\leq}{\leqslant}

\selectlanguage{english}

\usepackage{amsmath,amssymb,amsthm}  
\usepackage{amsfonts}  
\usepackage{bbm}    

\usepackage{bm}
\usepackage{hyperref}
\usepackage{accents}
\usepackage{mathrsfs}
\usepackage{mathtools}
\usepackage{fixmath}
\usepackage{fullpage}
\usepackage{paralist}
\usepackage{stmaryrd}

\usepackage{booktabs}
\usepackage{array}

\usepackage{pgf,tikz}
\usetikzlibrary{arrows}
\usetikzlibrary{patterns}
\usetikzlibrary[positioning]
\usetikzlibrary[fit]
\usetikzlibrary{shapes.geometric}
\usetikzlibrary{calc}

\usepackage{overpic}

\RequirePackage{etex}
\RequirePackage{etoolbox}
\RequirePackage{thmtools}
\RequirePackage{environ}
\RequirePackage{amsthm}
\RequirePackage{amssymb}
\newtheorem{theorem}{Theorem}
\newtheorem{proposition}[theorem]{Proposition}
\newtheorem{corollary}[theorem]{Corollary}

\newtheorem{theoremdefinition}[theorem]{Theorem-Definition}
\newtheorem{definition}[theorem]{Definition}
\theoremstyle{remark}
\newtheorem{example}[theorem]{Example}
\newtheorem{remark}[theorem]{Remark}

\setdefaultitem{$\triangleright$}{}{}{}

\usepackage{ifthen}
\usepackage[colorinlistoftodos,bordercolor=orange,backgroundcolor=orange!20,linecolor=orange,textsize=scriptsize]{todonotes}

\usepackage{xspace}

\DeclareSymbolFont{stmry}{U}{stmry}{m}{n}
\DeclareMathSymbol\mapsfromchar\mathrel{stmry}{"5B}


\DeclareMathOperator*{\tdet}{tdet}
\DeclareMathOperator*{\tper}{tdet}

\DeclareMathOperator*{\tcone}{tpos}

\DeclareMathOperator*{\conv}{conv}

\DeclareMathOperator*{\Sym}{Sym}



\DeclareMathAlphabet\mathbfcal{OMS}{cmsy}{b}{n}

\DeclareMathAlphabet{\mathbbold}{U}{bbold}{m}{n}


\newcommand{\R}[0]{\mathbb{R}}       

\newcommand{\K}[0]{\mathbb {K}}                


\newcommand{\trop}[1][]{
\ifthenelse{\equal{#1}{}}{ \mathbb{T} }{ \mathbb{T}(#1) }%
}    



\newcommand{\transpose}[1]{#1^{\top}}







\DeclareMathOperator*{\val}{val}

\newcommand{\puiseuxP}[0]{\bm{\mathcal{P}}}   


\newcommand{\p}[0]{\bm{p}}

\renewcommand{\a}[0]{\bm{a} }























\newcommand{\perturb}[2][]{
\ifthenelse{\equal{#1}{}}{ \tilde{\Ipert{#2}} }{ #2[#1] }%
}
\newcommand{\pert}[2][]{
\ifthenelse{\equal{#1}{}}{ \widetilde{\Ipert{#2}} }{ \Ipert{#2}[#1] }%
}




\newcommand{\Ipert}[1]{
\mathsf{#1}}

\newcommand{\puiseuxA}[0]{\bm{\mathcal{A}}} 
\newcommand{\f}[0]{\bm{f}}
\newcommand{\g}[0]{\bm{g}}
\newcommand{\vol}{\operatorname{vol}}

\newcommand{\Kplus}{\K_{\geq 0}}

\DeclareMathOperator*{\atconv}{tconv}

\DeclareMathOperator*{\Log}{Log}

\newlength{\mytemplen}

\hyphenation{semi-ring}
\title{A tropical isoperimetric inequality}

\author{Jules Depersin
\and St\'ephane Gaubert
\and Michael Joswig} 

\address[Jules Depersin]{\'Ecole Polytechnique, 91128 Palaiseau Cedex France}
\email{jules.depersin@polytechnique.edu}
\address[St{\'e}phane Gaubert]{INRIA and CMAP, \'Ecole Polytechnique, CNRS, Universit\'e Paris Saclay, 91128 Palaiseau Cedex France}
\email{stephane.gaubert@inria.fr}
\address[Michael Joswig]{%
 Institut f{\"u}r Mathematik,
 TU Berlin, 
 Straße des 17. Juni 136, 10623 Berlin, Germany}
\email{joswig@math.tu-berlin.de}
\thanks{S.~Gaubert is partially supported by the PGMO program of Fondation Math\'ematique Jacques Hadamard and EDF}
\thanks{M.~Joswig is partially supported by Einstein Foundation Berlin and DFG (Priority Program 1489 and Collaborative Research Center TRR 109).  This work was started while he was a CNRS INSMI visiting professor at CMAP, \'Ecole Polytechnique, UMR 7641 and IMJ, Universit\'e Pierre et Marie Curie, UMR 7586.}
\subjclass[2010]{14T05 (52B12, 05C38)}
\keywords{Tropical geometry, polytopes, log-limit sets, volume, idempotent measures}

\DeclareMathOperator\tdist{tdist}
\DeclareMathOperator\tvol{tvol}
\DeclareMathOperator\qvol{qtvol}
\DeclareMathOperator\tdiam{tdiam}
\newcommand\id{{\operatorname{id}}}
\newcommand\1{{\mathbf 1}}

\begin{document}

\begin{abstract}
We introduce tropical analogues of the notion of volume of polytopes, leading to a tropical version of the (discrete) classical isoperimetric inequality.
  The planar case is elementary, but a higher-dimensional generalization leads to an interesting class of ordinary convex polytopes, characterizing the equality case in the isoperimetric inequality. 
  This study is motivated by open complexity questions concerning linear optimization and its tropical analogs. 
\end{abstract}

\maketitle

\section{Introduction}
\noindent
The classical isoperimetric inequality states that the bounded planar region with given perimeter which maximizes the area is the circular disk.
Its discrete version, from which a proof of the smooth result can be derived, says that the triangle with fixed perimeter which maximizes the area is equilateral; see \cite{Blasjo:2005} for a nice survey.
A minor variation of the same problem asks to maximize the area for fixed diameter (instead of fixed perimeter), and it has the same result.
The tropical analog is a statement about the tropical convex hull of three points in the plane.
Our main contribution is a generalization of that result to arbitrary dimensions.

This work is motivated by research on delicate complexity issues related with classical linear programming.
In \cite{ABGJ:1405.4161} a family of linear programs was constructed which exhibits central paths with unusually large total curvature.
These linear programs provide counter-examples to a ``continuous analog of the Hirsch conjecture'' by Deza, Terlaky and Zinchenko~\cite{DTZ08}.
The key idea in \cite{ABGJ:1405.4161} was to obtain a lower bound for the total curvature of the central path by means of a piecewise-linear curve which can be associated with the tropicalization of  linear program.
In this way discrete notions of curvature, which make sense from a tropical geometry perspective, give rise to non-trivial metric estimates for classical curves.
This lead us to further investigate aspects of tropical geometry in the spirit of discrete differential geometry \cite{DDG}.
In particular, we are interested in tropical versions of the isoperimetric inequalities.

Tropical linear algebra is concerned with $(\max,+)$- or $(\min,+)$-analogs of classical linear algebra.
A \emph{tropical polyhedral cone} is a set of points in $\R^d$ which are tropical linear combinations of finitely many generators.
Its image under the projection modulo the all-ones vector is a \emph{tropical polytope}; see \cite{cgq02}, \cite{Tropical+Book} and the references there for an overview of the theory.
The search for a tropical analogue of volume lead us to propose a new notion which captures the metric intuition of tropical polytopes well enough.
Our main result shows that the tropical simplices which maximize this tropical volume for fixed tropical diameter are convex in the ordinary sense, i.e., these are the \emph{polytropes} studied in \cite{JoswigKulas:2010}.
The polytropes form the combinatorial building blocks of tropical convexity \cite[\S5.2]{Tropical+Book}.
In combinatorial optimization they arise naturally in the study of shortest path algorithms \cite[\S8.3]{Schrijver03:CO_A}, \cite{Sergeev:2007}, \cite{JoswigLoho:2016}.
Furthermore, they are isomorphic to tropical eigenspaces (see e.g.~\cite[Chapter~4]{Butkovic:2010}), play a role in the theory of semigroups \cite{JohnsonKambites:2015} and occur in statistical ranking \cite{SturmfelsTran:2013}.
It is known that, up to symmetry, there is precisely one generic combinatorial type of polytropes in $\R^3/\R\1$~\cite{JoswigKulas:2010}.
The generic polytropes in $\R^4/\R\1$ were classified in~\cite{JiminezDelaPuente:1205.4162} (see also~\cite{Tran:1310.2012}); there are precisely six types.
However, it turns out that, at least in these dimensions, only one generic type maximizes the tropical volume for fixed tropical diameter.

A second approach to obtain a tropical analogue of volume is to employ the ``dequantization'' method 
{\cite{kolomaslov}}, thinking of a tropical polytope as a log-limit of a family of
classical polytopes, and defining the limit of the normalized volumes of these polytopes.  We shall see that the volume
obtained in this way also has several good properties.  For instance, it turns out to be an idempotent measure on the
space of generic tropical polytopes and easy to compute. However, this yields a degenerate isoperimetric inequality.

\section{Tropical distance and volume}\label{sec-tropd}
\noindent
For two points $v,w\in\R^d$ the \emph{tropical distance}
is the number
\[
\begin{split}
  \tdist(v,w) \ :=& \ \max\bigl\{ (v_i-w_i) \mid i\in[d] \bigr\} - \min\bigl\{ (v_i-w_i) \mid i\in[d] \bigr\}\\
  =& \ \max_{i,j\in[d]} \bigl| v_i - w_i + w_j - v_j\bigr| \enspace .
\end{split}
\]
This number  was shown in~\cite{cgq02} to play the role of the Euclidean
distance in the tropical setting. It is a special instance of Hilbert's projective metric.
We have
\begin{equation}\label{eq:tdist:add}
  \begin{aligned}
    \tdist(\1+v,w) \  &= \ \tdist(v,w) \quad \text{and}\\
    \tdist(u+v,u+w) \ &= \ \tdist(v,w) \quad \text{for all } u\in \R^d \enspace.
  \end{aligned}
\end{equation}
In particular, this entails that $\tdist$ induces a metric on the \emph{tropical projective torus} $\R^d/\R\1$.
Moreover, we have
\begin{equation}\label{eq:tdist:scale}
  \tdist(\lambda\cdot v,\lambda\cdot w) \ = \ |\lambda|\cdot\tdist(v,w) \quad \text{for all } \lambda \in \R \enspace .
\end{equation}
This distance function is valid for both, $\min$ and $\max$, as the tropical addition, denoted by $\oplus$ in the sequel.

Now let $A=(a_{ij})\in\R^{d\times d}$ be a square matrix.
We write $a_{i\cdot}$ for the $i$th row and $a_{\cdot k}$ for the $k$th column of $A$.
The \emph{tropical diameter} of $A$ is the maximum
\[
\tdiam A \ := \ \max_{i,j\in[d]} \tdist(a_{i\cdot},a_{j\cdot}) \ = \ \max_{i,j,k,\ell}  \bigl| a_{ik} - a_{i\ell} + a_{j\ell} - a_{jk}\bigr| \enspace .
\]
Notice that the tropical diameter is the same as the diameter of a complete metric graph on $d$ nodes with non-negative edge lengths.
Further, the tropical diameters of a square matrix and its transpose agree.
\begin{example}\label{exmp:unit_matrix}
  The tropical diameter of the ordinary $d{\times}d$-unit matrix, with ones on the diagonal and zeros elsewhere, equals two.
\end{example}
Observe that the tropical diameter does not change if the rows or columns of $A$ are permuted.
\begin{definition}
We now define the \emph{tropical volume} of $A$ as the expression
\[
\tvol A \ := \ \left|\bigoplus_{\sigma\in\Sym(d)}\sum a_{i,\sigma(i)} - \bigoplus_{\tau\in(\Sym(d)-\sigma_{\text{opt}})}\sum a_{i,\tau(i)}\right| \enspace ,
\]
where $\sigma_{\text{opt}}$ is an optimal solution of the first optimization in the above.
\end{definition}
In other words, $\sigma_{\text{opt}}$ is a permutation for which $\sum a_{i,\sigma_{\text{opt}}(i)}$ coincides with the tropical determinant of $A$.
Like the tropical diameter also the tropical volume is insensitive to transposing the matrix $A$ or to any reordering of its rows or columns.
The tropical volume can be computed in $O(d^3)$ time \cite[\S5.4.1]{Assignment}.

Unlike the tropical diameter, which is an established notion, our definition of the tropical volume is new, at least under this name.
Since our results below rely on this notion in a crucial way, a few words are in order.
The classical volume has its foundation in measure theory, and the classical determinant yields the (normalized) volume of a simplex.
Tropical polytopes arise by ``dequantization'' of classical polytopes, or,
if one prefers, as images of ordinary convex polytopes over real Puiseux series under the valuation map; this was first observed by Develin and Yu \cite{DevelinYu:2007},
and this is the point of departure of \cite{ABGJ:1405.4161}.
This leads to a notion of dequantized tropical volume with several good properties, but we defer the discussion to Section~\ref{sec-dequantize}.

We prefer our definition of the tropical volume, $\tvol$, since it leads to more interesting isoperimetric problems.
The fact that it captures an essential metric property of tropical polytopes can be seen from the following observations.
First, the tropical volume is a higher-dimensional generalization of the tropical distance function: indeed, in the linear case $d=2$ the tropical diameter and the tropical volume agree, i.e., $\tdiam A = \tvol A$ if $A$ is a $2{\times}2$-matrix.
More importantly, the tropical volume provides a measure of non-singularity:
it vanishes if and only if the rows (or the columns) of $A$ are contained in a tropical hyperplane \cite[Lemma~5.1]{RichtergebertSturmfelsTheobald:2005}. 
In terms of statistical physics, the tropical volume is an energy gap, which appeared in the analysis of a non-standard optimal assignment algorithm
by Kosowsky and Yuille. Their key result~\cite[Theorem~9]{KosowskyYuille:1994}
estimates the speed of convergence by an increasing function of the energy
gap. Characterizing matrices with a maximal energy gap, knowing
bounds on their entries, is precisely a tropical isoperimetric
problem.

We call two square matrices \emph{equivalent} if they can be transformed into one other by row and column permutations or by operations as in (\ref{eq:tdist:add}).
Up to reordering the rows and columns we may assume that the identity permutation attains the tropical determinant.
Since neither the tropical diameter nor the tropical volume changes if we translate each column by the same vector, we may assume that the first column is the vector $(1,0,0,\dots,0)$.
Further, we can (ordinarily) add any multiple of $\1$ to each column without changing the tropical diameter, the tropical determinant or the tropical volume.
Thus we may assume that each column, except for the first, begins with a zero.
We call a matrix \emph{$\max$-standard} if the identity is an optimal permutation and if the first row and column read $(1,0,0,\dots,0)$.
Each square matrix is equivalent to a $\max$-standard matrix.
In view of the Example~\ref{exmp:unit_matrix} we will subsequently normalize the tropical diameter to two.
\begin{theorem}[Tropical isodiametric inequality]\label{thm:iso}
  Assume that $\oplus=\max$ is the tropical addition.
  Let $A\in\R^{d\times d}$ be a matrix with tropical diameter two.
  Then the tropical volume does not exceed two.
  Moreover, if $\tvol A=2$ then $A$ is equivalent to a $\max$-standard matrix whose coefficients $a_{ij}$ satisfy the following conditions:
  \begin{compactenum}
  \item[(i)]  $-1 \leq a_{ij}\leq 1$,
  \item[(ii)] $a_{ii} = 1$,
  \item[(iii)] $a_{ji} = - a_{ij}$ for $i\neq j$, and
  \item[(iv)] $-1 \leq a_{ij}+a_{jk}+a_{ki} \leq 1$ for $i,j,k$ distinct.
  \end{compactenum}
  Conversely, the tropical diameter and the tropical volume of each standard matrix satisfying these four conditions both equal two.
\end{theorem}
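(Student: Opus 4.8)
The plan is to recast the tropical volume in terms of the \emph{second best assignment} and then argue combinatorially on cycles in $\Sym(d)$ (we tacitly assume $d\geq 2$). By the discussion preceding the theorem we may replace $A$ by an equivalent $\max$-standard matrix, which changes neither $\tdiam A$ nor $\tvol A$; so assume the identity permutation attains $\tdet A=\sum_i a_{ii}$ and that the first row and column equal $(1,0,\dots,0)$. With $\oplus=\max$ the definition of the tropical volume reads $\tvol A=\tdet A-\max_{\tau\neq\id}\sum_i a_{i\tau(i)}=\min_{\tau\neq\id}\Delta_\tau$, where $\Delta_\tau:=\sum_i a_{ii}-\sum_i a_{i\tau(i)}\geq0$. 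Decomposing $\tau$ into disjoint cycles, $\Delta_\tau$ splits as a sum over the nontrivial cycles $C$ of $\tau$ of the quantities $\Delta_C:=\sum_{i}\bigl(a_{ii}-a_{i,C(i)}\bigr)$ (the sum over the indices moved by $C$), each of which is $\geq0$ because a single cycle is a permutation beaten by the identity. Hence $\tvol A=\min\{\Delta_C:\ C\text{ a cyclic permutation of a subset of }[d]\text{ of size}\geq2\}$; this reformulation carries most of the argument.

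For the inequality, any transposition $C=(ij)$ satisfies $\Delta_C=a_{ii}+a_{jj}-a_{ij}-a_{ji}=\bigl|a_{ii}-a_{ij}+a_{jj}-a_{ji}\bigr|\leq\tdist(a_{i\cdot},a_{j\cdot})\leq\tdiam A=2$, so $\tvol A\leq\Delta_C\leq2$. For the equality case, suppose $\tvol A=2$. Then $\Delta_C\geq2$ for every cycle $C$, and together with the previous display $\Delta_{(ij)}=2$ for all $i\neq j$. Now $\Delta_{(1j)}=2$ with $a_{11}=1$ and $a_{1j}=a_{j1}=0$ forces $a_{jj}=1$, giving (ii); feeding this back into $\Delta_{(ij)}=2$ gives $a_{ij}+a_{ji}=0$, i.e.\ (iii); the inequalities $\Delta_{(ijk)}\geq2$ and $\Delta_{(ikj)}\geq2$ for distinct $i,j,k$ (using $a_{ii}=1$) become $-1\leq a_{ij}+a_{jk}+a_{ki}\leq1$, i.e.\ (iv); and for distinct $i,j\geq2$ the triangles $(1,i,j)$ and $(1,j,i)$ give $3-a_{ij}\geq2$ and $3+a_{ij}\geq2$, hence $-1\leq a_{ij}\leq1$, which together with the remaining entries (which are $0$ or $1$) yields (i); when $d=2$ the normalization already forces $A$ to be the unit matrix.

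For the converse, let $A$ satisfy (i)--(iv) (the $\max$-standard normalization is not needed here); I will show $\Delta_C\geq2$ for every cycle, with equality for transpositions, and then deduce the diameter statement. Transpositions give exactly $2$ by (ii) and (iii); triangles give $\geq2$ by (ii) and (iv); for a cycle $C=(i_1i_2\cdots i_k)$ with $k\geq4$ I cut off the triangle $C_1=(i_1i_2i_3)$, leaving the $(k-1)$-cycle $C_2=(i_3i_4\cdots i_ki_1)$. A direct expansion shows that the chord edges contribute $a_{i_1i_3}+a_{i_3i_1}=0$ by (iii) and that the doubled diagonal term is $a_{i_1i_1}+a_{i_3i_3}=2$ by (ii), whence the identity $\Delta_C=\Delta_{C_1}+\Delta_{C_2}-2$; induction on $k$ then gives $\Delta_C\geq2+2-2=2$. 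Consequently every $\Delta_\tau\geq0$, the identity is optimal, and the reformulation of the first paragraph yields $\tvol A=\min_C\Delta_C=2$. For the diameter, fix $i\neq j$: among the numbers $a_{ik}-a_{jk}$, the value at $k=i$ is $a_{ij}+1$ and at $k=j$ is $a_{ij}-1$ (by (iii)), while for $k\notin\{i,j\}$ rewriting (iv) for the triple $(i,k,j)$ with (iii) gives $a_{ik}-a_{jk}\in[a_{ij}-1,\,a_{ij}+1]$; thus $\tdist(a_{i\cdot},a_{j\cdot})=2$ for every pair of rows, so $\tdiam A=2$.

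The one genuinely non-routine point is the cycle identity $\Delta_C=\Delta_{C_1}+\Delta_{C_2}-2$ used in the converse: it is precisely what lets the long-cycle induction collapse onto triangles and transpositions, and it is exactly here that conditions (ii) and (iii) are used essentially rather than cosmetically. Everything else is bookkeeping around the reformulation $\tvol A=\min_C\Delta_C$ and the elementary observation $\Delta_{(ij)}\leq\tdiam A$.
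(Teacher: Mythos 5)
Your proof is correct and follows essentially the same route as the paper: transpositions $(1\;i)$ and $(i\;j)$ combined with the diameter bound yield (ii) and (iii), $3$-cycles yield (iv), and the converse is an induction contracting long cycles onto triangles and transpositions --- your identity $\Delta_C=\Delta_{C_1}+\Delta_{C_2}-2$ is a repackaging of the paper's chord inequality $a_{\sigma_1\sigma_2}+a_{\sigma_2\sigma_3}\leq 1+a_{\sigma_1\sigma_3}$. The only substantive differences are in your favor: you prove the upper bound $\tvol A\leq 2$ up front from $\Delta_{(i\;j)}\leq\tdist(a_{i\cdot},a_{j\cdot})$ rather than deducing it indirectly, and your verification that the diameter equals two correctly invokes (iv) for the indices $k\notin\{i,j\}$, a point the paper glosses over by asserting that (i)--(iii) alone suffice.
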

\begin{proof}
  Let $A$ be any square matrix with tropical diameter two.
  
  We need to examine the coefficients outside the first row and the first column.
  Since $A$ has tropical diameter two, none of these coefficients can exceed one.

  The Example~\ref{exmp:unit_matrix} shows whose tropical diameter and volume both yield two.
  Hence we may assume that $\tvol A\geq 2$.
  For any permutation $\sigma\in\Sym(d)$ we abbreviate $A|_\sigma := \sum a_{i,\sigma(i)}$.
  With this notation we have
  \begin{equation}\label{eq:iso:tvol}
    \tvol A \ = \ A|_\id - \max_{\tau\neq\id} A|_\tau \ \geq 2 \enspace .
  \end{equation}
  Now consider the transposition $\rho=(1\ i)$ for any $i\geq 2$.
  Then (\ref{eq:iso:tvol}) forces that
  \[
  A|_\id - A|_\rho \ = \ a_{11}+a_{ii}-a_{1i}-a_{i1} \ = \ 1 + a_{ii} - 0 - 0 \ \geq \ 2 \enspace ,
  \]
  which gives $a_{ii}=1$ since no coefficient of $A$ is larger than one.
  Summing up the discussion so far, our matrix $A$ has the following shape:
  \[
  \begin{pmatrix}
    1      & 0 & 0    & \cdots & 0 \\
    0      & 1 & a_{23}& \cdots & a_{2d} \\
    0      & a_{32}& 1 & \ddots  & \vdots \\
    \vdots & \vdots & \ddots & \ddots & a_{d-1,d} \\
    0      & a_{d2} & \cdots & a_{d,d-1} & 1
  \end{pmatrix} \enspace .
  \]

  Next we consider the transposition $\tau=(i\ j)$ for distinct $i,j\neq 1$.
  Again from (\ref{eq:iso:tvol}) we get
  \[
  A|_\id - A|_\tau \ = \ a_{ii} + a_{jj} - a_{ij} - a_{ji} \ = \ 2 - a_{ij} - a_{ji} \geq \ 2
  \]
  and thus $a_{ij}+a_{ji} \leq 0$.
  However, we have
  \[
  |2 - (a_{ij} + a_{ji})| \ \leq \ \tdist(a_{\cdot i},a_{\cdot j}) \ \leq \ 2 \enspace ,
  \]
  which implies $a_{ij}=-a_{ji}$.
  
  Finally, we consider the 3-cycle $\sigma=(i\ j\ k)$ for pairwise distinct $i,j,k$.
  Applying (\ref{eq:iso:tvol}) for a third time gives
  \[
  A|_\id - A|_\sigma \ = \ 3 - a_{ij} - a_{jk} - a_{ki} \ \geq 2 \enspace .
  \]
  We arrive at $a_{ij} + a_{jk} + a_{ki} \leq 1$.
  Since this argument holds for $i,j,k$ arbitrary with (ii) we also have
  \[
  a_{ij} + a_{jk} + a_{ki} \ = \ - a_{ji} - a_{kj} - a_{ik} \ = \ -(a_{ik} + a_{kj} + a_{ji}) \ \geq \ -1 \enspace .
  \]

  So far we have shown that any matrix of tropical diameter two and tropical volume greater than or equal to two satisfies the four conditions claimed.
  To prove all remaining claims we need to show that each matrix which satisfies the conditions (i) through (iv) has tropical diameter and tropical volume two.
  It is easy to see that for a standard matrix, in fact, the first three properties force that the tropical diameter attains the desired value.

  We need to prove that the tropical volume of a matrix which satisfies all four conditions is exactly two.
  Let $\sigma=(\sigma_1 \ \sigma_2 \ \ldots \ \sigma_\ell)\in\Sym(d)$ be an arbitrary $\ell$-cycle for $\ell\geq 3$ with pairwise distinct indices $\sigma_i$.
  Then from (iv) and (iii) we get
  \[
  a_{\sigma_1,\sigma_2} + a_{\sigma_2,\sigma_3} \ \leq \ 1 - a_{\sigma_3,\sigma_1} \ = \ 1 + a_{\sigma_1,\sigma_3}
  \]
  and thus
  \[
  a_{\sigma_1,\sigma_2} + a_{\sigma_2,\sigma_3} + \dots + a_{\sigma_{\ell-1},\sigma_\ell} + a_{\sigma_\ell,\sigma_1} \ \leq \ (\ell-3)+1 \ = \ \ell-2
  \]
  by induction.
  It follows that $A|_\sigma \leq (d-\ell)+(\ell-2) = d-2$.
  If now $\sigma'$ is an arbitrary non-identity permutation it decomposes into $k\geq 1$ disjoint cycles, and it follows that
  \[
  A|_{\sigma'} \ \leq \ d-2k \quad \text{and thus} \quad A|_\id - A|_{\sigma'} \ \geq \ 2k \enspace .
  \]
  For the transposition $(1\ 2)$ we obtain $A|_{(1\ 2)}=d-2$ by a direct computation, and this finally yields $\tvol A = A_\id - A_{(1\ 2)} = 2$.
\end{proof}

We call a matrix \emph{$\min$-standard} if the identity is an optimal permutation and if the first row and column read $(0,1,1,\dots,1)$.
Each square matrix is equivalent to a $\min$-standard matrix.

\begin{corollary}[Tropical isodiametric inequality]\label{cor:iso}
  Assume that $\oplus=\min$ is the tropical addition.
  Let $B\in\R^{d\times d}$ be a matrix with tropical diameter and tropical volume two.
  Then $B$ is equivalent to a $\min$-standard matrix whose coefficients $b_{ij}$ satisfy the following conditions:
  \begin{compactenum}
  \item[(i)]  $0 \leq b_{ij}\leq 2$,
  \item[(ii)] $b_{ii} = 0$,
  \item[(iii)] $b_{ij} + b_{ji} = 2$ for $i\neq j$, and
  \item[(iv)] $2 \leq b_{ij}+b_{jk}+b_{ki} \leq 4$ for $i,j,k$ distinct.
  \end{compactenum}
  Conversely, the tropical diameter and the tropical volume of each standard matrix satisfying these four conditions both equal two.
\end{corollary}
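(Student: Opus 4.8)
The plan is to reduce the $\min$-version to the already proved $\max$-version, Theorem~\ref{thm:iso}, via the sign involution $B\mapsto -B$ composed with a translation of the columns by $\1$. Two elementary observations drive this. First, since $\min_{\sigma}\sum b_{i,\sigma(i)}=-\max_{\sigma}\sum(-b_{i,\sigma(i)})$ and a permutation minimizes $\sum b_{i,\sigma(i)}$ exactly when it maximizes the tropical determinant of $-B$, the definition of $\tvol$ (an absolute value of a difference) gives $\tvol B=\tvol(-B)$, where the left-hand side is taken with $\oplus=\min$ and the right-hand side with $\oplus=\max$; likewise $\tdiam B=\tdiam(-B)$, because the tropical diameter is expressed purely through $|b_{ik}-b_{i\ell}+b_{j\ell}-b_{jk}|$ and hence is insensitive both to the tropical addition and to the sign change. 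Second, each of the equivalence operations---row and column permutations, adding a multiple of $\1$ to a single column, and translating all columns by a common vector---is carried to an operation of the same kind under $B\mapsto -B$, so $B$ is equivalent to $B'$ if and only if $-B$ is equivalent to $-B'$; and all of these operations preserve both $\tdiam$ and $\tvol$ for either choice of $\oplus$.

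For the ``only if'' part, set $A:=-B$. By the observations $A$ has $\max$-tropical diameter and $\max$-tropical volume both two, so Theorem~\ref{thm:iso} yields a $\max$-standard matrix $\hat A=(\hat a_{ij})$ equivalent to $A$ and satisfying conditions (i)--(iv) of the theorem. Then $B=-A$ is equivalent to $-\hat A$, and translating every column of $-\hat A$ by $\1$ (a further equivalence operation) produces the matrix $B'$ with entries $b'_{ij}=1-\hat a_{ij}$. Adding the same constant to every entry of a column shifts each $\sum b'_{i,\sigma(i)}$ by the same amount, so the identity is still an optimal---now minimizing---permutation for $B'$; and the first row and column of $-\hat A$ read $(-1,0,\dots,0)$, hence those of $B'$ read $(0,1,\dots,1)$, so $B'$ is $\min$-standard. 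Finally the affine substitution $b'_{ij}=1-\hat a_{ij}$ turns conditions (i)--(iv) of Theorem~\ref{thm:iso} term by term into conditions (i)--(iv) of the corollary: $\hat a_{ii}=1$ becomes $b'_{ii}=0$; $\hat a_{ij}+\hat a_{ji}=0$ becomes $b'_{ij}+b'_{ji}=2$; $-1\le\hat a_{ij}\le1$ becomes $0\le b'_{ij}\le2$; and $-1\le\hat a_{ij}+\hat a_{jk}+\hat a_{ki}\le1$ becomes $2\le b'_{ij}+b'_{jk}+b'_{ki}\le4$.

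For the converse, let $B=(b_{ij})$ be any $\min$-standard matrix satisfying (i)--(iv) of the corollary. Translate every column of $B$ by $-\1$ and negate all entries; the result is the matrix $\hat A$ with $\hat a_{ij}=1-b_{ij}$, whose first row and column read $(1,0,\dots,0)$, and the same term-by-term substitution shows that $\hat A$ meets conditions (i)--(iv) of Theorem~\ref{thm:iso}. By the ``conversely'' clause of that theorem, $\tdiam\hat A=\tvol\hat A=2$ in the $\max$-setting. Undoing the two moves---negation returns us from the $\max$-setting to the $\min$-setting while, by the first paragraph, preserving both quantities, and translating all columns by $\1$ is an equivalence operation---we conclude that $B$ has tropical diameter and tropical volume both equal to two.

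I do not expect a genuine obstacle: the argument is a formal transfer along the order-reversing bijection $a\leftrightarrow 1-b$. The only points that need care are the bookkeeping of the normalization (which row and column must carry the entry $1$, respectively $0$, after the sign flip and the $\1$-translation, and the fact that optimality of the identity permutation survives translating columns by a constant), together with the routine verification that the affine substitution matches the two quadruples of inequalities exactly.
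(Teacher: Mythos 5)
Your proof is correct and follows essentially the same route as the paper, which simply sets $B:=\1-A$ (all-ones matrix minus a $\max$-standard matrix from Theorem~\ref{thm:iso}) and notes that negation exchanges $\max$ and $\min$; your substitution $b_{ij}=1-a_{ij}$ is exactly this map, spelled out with the bookkeeping of the equivalence operations and the term-by-term translation of conditions (i)--(iv).
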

\begin{proof}
  Let $A$ be a $\max$-standard matrix as in Theorem~\ref{thm:iso}.
  Then $B:=\1-A$, where $\1$ is the all-ones matrix, is $\min$-standard, and it satisfies the claim.
  Going to the negative exchanges $\max$ and $\min$.
\end{proof}
In the sequel we will be concerned with non-negative $d{\times}d$-matrices $B$ which satisfy the conditions (ii), (iii) and (iv) in Corollary~\ref{cor:iso}.
We call any matrix which is equivalent to such a matrix \emph{tropically near-isodiametric} (with respect to $\min$).
The matrix is \emph{tropically isodiametric} if additionally the tropical diameter and the tropical volume are equal to two.
\begin{proposition}\label{prop:B2}
  Let $B\in\R^{d\times d}$ be a tropically near-isodiametric matrix with respect to $\oplus=\min$ as the tropical addition.
  Then the matrix equation $B\odot_{\min}B=B$ holds.
\end{proposition}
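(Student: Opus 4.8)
The plan is to reduce the matrix identity $B\odot_{\min}B=B$ to a family of scalar (min-plus) triangle inequalities and to read those off from conditions (ii), (iii), (iv) of Corollary~\ref{cor:iso}. Throughout I may assume that $B$ itself is a non-negative matrix satisfying (ii), (iii), (iv) --- this is the normal form of Corollary~\ref{cor:iso} to which a near-isodiametric matrix is equivalent, and it is the representative for which the asserted identity is to hold. Writing $(B\odot_{\min}B)_{ik}=\min_{j\in[d]}\bigl(b_{ij}+b_{jk}\bigr)$, the first, easy half is the inequality $B\odot_{\min}B\le B$ entrywise: choosing $j=i$ in the minimum and using $b_{ii}=0$ from (ii) gives $(B\odot_{\min}B)_{ik}\le b_{ii}+b_{ik}=b_{ik}$ for all $i,k$.

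For the reverse inequality I would establish the triangle inequality
\[
b_{ik}\ \le\ b_{ij}+b_{jk}\qquad\text{for all }i,j,k\in[d],
\]
which, together with the vanishing diagonal, is exactly what is needed for $B$ to be a fixed point of tropical matrix multiplication (the familiar Kleene-star property of shortest-path, or metric, matrices). The argument splits into cases. If $j\in\{i,k\}$ the inequality is immediate from $b_{ii}=b_{kk}=0$. If $i=k$ and $j\notin\{i,k\}$ it reads $0\le b_{ij}+b_{ji}$, which follows from (iii), where in fact $b_{ij}+b_{ji}=2$. The only case with any content is $i,j,k$ pairwise distinct: the lower bound in (iv) gives $b_{ij}+b_{jk}\ge 2-b_{ki}$, while (iii) gives $b_{ki}=2-b_{ik}$, hence $b_{ij}+b_{jk}\ge b_{ik}$. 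Combining the two halves yields $B\odot_{\min}B=B$.

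I do not anticipate a genuine obstacle: the crux is simply the observation that idempotency of a zero-diagonal matrix over $(\min,+)$ is equivalent to the triangle inequality, after which conditions (ii)--(iv) are precisely tailored to deliver it. Note that neither the upper bounds in (i) and (iv), nor non-negativity, nor the isodiametric normalization (tropical diameter and tropical volume equal to two) enters the argument; only the zero diagonal, the antisymmetry $b_{ij}+b_{ji}=2$, and the lower bound $b_{ij}+b_{jk}+b_{ki}\ge 2$ are used. The one point that asks for a word of care is the reduction at the outset to a matrix in the normal form of Corollary~\ref{cor:iso}, i.e., fixing the representative of the equivalence class for which the stated identity is meant.
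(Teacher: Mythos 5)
Your proof is correct and follows essentially the same route as the paper's: the upper bound $(B\odot_{\min}B)_{ik}\le b_{ik}$ comes from the zero diagonal (ii), and the reverse inequality is the triangle inequality obtained by combining the lower bound in (iv) with the antisymmetry (iii), exactly as in the paper. Your explicit remarks that only (ii), (iii) and the lower half of (iv) are needed, and that one must first fix the normal-form representative of the equivalence class, are accurate and consistent with the paper's (more implicit) treatment.
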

Notice that here we do \emph{not} assume $B$ to be standard.
That is, we do not specify the first row and column.
Also we do not require the upper bound in property (i), such that and the coefficients may be larger than two.
\begin{proof}
  Let $c_{ij}$ be the coefficient of the matrix $C:=B\odot_{\min}B$ in the $i$th row and the $j$th column.
  The value
  \begin{equation}\label{eq:B2:min}
    c_{ij} \ = \ \min(b_{i1}+b_{1j},b_{i2}+b_{2j},\dots,b_{id}+b_{dj})
  \end{equation}
  is the $\min$-tropical scalar product of the $i$th row $b^{i\cdot}$ with the $j$th column $b^{\cdot j}$ of $B$.
  We will show that $C=B$.

  First note that $c_{ij}\geq 0$ for all $i$ and $j$ since each coefficient of $B$ are assumed to be non-negative.
  Moreover, from (ii) we get
  \begin{equation}\label{eq:B2:ineq}
    c_{ij} \ \leq \ b_{ii}+b_{ij} \ = \ b_{ij}+b_{jj} \ = \ b_{ij} \enspace .
  \end{equation}
  Specializing $i=j$ in (\ref{eq:B2:ineq}) forces $c_{ii}=0$, and this means that the diagonal entries of $C$ and $B$ agree.
  Now let $i,j,k$ be pairwise distinct.
  In this case the inequality (iv) gives
  \[
  b_{ij} \ = \ 2-b_{ji} \ \leq \ b_{ik}+b_{kj} \ \leq \ 4-b_{ji} \ = \ 2-b_{ij}
  \]
  if combined with (iii).
  We conclude that for $i\neq j$ the minimum in (\ref{eq:B2:min}) as attained (twice) at $b_{ii}+b_{ij} = b_{ij}+b_{jj} = b_{ij}$.
  This completes the proof of our claim $C=B$.
\end{proof}

The \emph{conical tropical convex hull} of a $d{\times}m$-matrix $M$,
denoted as $\tcone M$, is the set $\{M\odot x\mid x\in\R^m\}$.
Since this is a homogeneous notion we usually consider $\tcone M$ as a subset of the tropical projective torus $\R^d/\R\1$.
The following statement is phrased without an explicit reference to a tropical addition.
It works in both cases.
\begin{corollary}\label{cor:polytrope}
  Let $M\in\R^{d\times d}$ be near-isodiametric.
  Then the conical tropical convex hull of the columns (or the rows) of $M$ is convex in the ordinary sense, i.e., it is a polytrope.
\end{corollary}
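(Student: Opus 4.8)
The plan is to deduce the statement from Proposition~\ref{prop:B2}, via the standard fixed-point description of the tropical cone generated by an idempotent matrix. First I would reduce to the case of a genuine near-isodiametric matrix. Passing to an equivalent matrix amounts to permuting the rows and columns of $M$ and to applying the operations from~(\ref{eq:tdist:add}): permuting columns does not change $\tcone M$ at all; adding a multiple of $\1$ to an individual column does not change $\tcone M$ either (reparametrise the corresponding coordinate of $x$); permuting rows permutes the coordinates of $\R^d/\R\1$; and translating all columns by a common vector translates $\tcone M$ as a whole. All of these operations turn polytropes into polytropes and non-polytropes into non-polytropes, so we may assume that $M = B$ is non-negative and satisfies conditions~(ii), (iii) and~(iv) of Corollary~\ref{cor:iso}; in particular $b_{ii} = 0$ for all $i$. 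I treat $\oplus = \min$; the case $\oplus = \max$ is obtained via the affine involution $M \mapsto \1 - M$ exactly as in the proof of Corollary~\ref{cor:iso}, ordinary convexity being preserved. Finally, the rows of $B$ are the columns of $B^{\top}$, and since conditions~(ii), (iii), (iv) are invariant under transposition (using~(iii) to rewrite the cyclic sum in~(iv)), it suffices to deal with the columns.

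Now Proposition~\ref{prop:B2} gives $B \odot_{\min} B = B$. I would then prove
\[
\tcone B \ = \ \bigl\{\, y \in \R^d \ \mid \ B \odot_{\min} y = y \,\bigr\} \enspace .
\]
The inclusion ``$\supseteq$'' is trivial (take $x = y$ in the definition of $\tcone B$), and ``$\subseteq$'' follows from associativity of tropical matrix multiplication: if $y = B \odot_{\min} x$, then $B \odot_{\min} y = (B \odot_{\min} B) \odot_{\min} x = B \odot_{\min} x = y$. To make the right-hand side explicit, note that because $b_{ii} = 0$ one has $(B \odot_{\min} y)_i = \min_k (b_{ik} + y_k) \leq b_{ii} + y_i = y_i$ for every $y$, so that $B \odot_{\min} y = y$ holds if and only if $y_i - y_k \leq b_{ik}$ for all $i,k \in [d]$. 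Therefore
\[
\tcone B \ = \ \bigl\{\, y \in \R^d \ \mid \ y_i - y_k \leq b_{ik} \ \text{ for all } \ i,k \in [d] \,\bigr\} \enspace .
\]
This set is cut out by upper bounds on differences of coordinates, hence is ordinarily convex; all $b_{ik}$ being finite, it is also bounded in $\R^d/\R\1$. Being an ordinarily convex set that is at the same time the tropical convex hull of the finitely many columns of $B$, it is a polytrope in the sense of~\cite{JoswigKulas:2010}.

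I do not anticipate a real obstacle here: the substantive input is Proposition~\ref{prop:B2}, and the rest is the bookkeeping of the equivalence operations needed to assume $b_{ii} = 0$ together with the elementary $\min$-plus computation of the fixed-point set. The one place where a slip is easy is checking that the defining conditions are stable under transposition — one must invoke~(iii) to see that the cyclic triangle inequality~(iv) is self-dual — which is precisely what allows ``rows or columns'' to be handled uniformly.
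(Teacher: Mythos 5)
Your proof is correct and follows essentially the same route as the paper: reduce to a non-negative matrix $B$ satisfying (ii)--(iv), invoke Proposition~\ref{prop:B2}, and identify $\tcone(B)$ with the weighted digraph polyhedron $\{y \mid y_i - y_k \leq b_{ik}\}$, which is ordinarily convex. The only difference is that where the paper cites the known fact that $\tcone(B^*)$ of a Kleene star is a weighted digraph polyhedron (Butkovi\v{c}, de la Puente), you re-derive it directly via the fixed-point characterization $\tcone B = \{y \mid B\odot_{\min}y = y\}$, which is a clean, self-contained substitute.
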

\begin{proof}
  Let us take $\min$ as the tropical addition here.
  We may assume that $M=B$ as in Corollary~\ref{cor:iso}.
  Then Proposition~\ref{prop:B2} says that $B$ agrees with its \emph{Kleene star}
  \[
  B^* \ := \ I \oplus B \oplus B^{\odot 2} \oplus B^{\odot 3} \odot \cdots \enspace ,
  \]
  where $I$ is the $\min$-tropical identity matrix, which has zero coefficients on the diagonal and $\infty$ otherwise.
  It is known that in this case $\tcone(B)=\tcone(B^*)$ is a weighted digraph polyhedron and thus a polytrope.
  For a proof see, e.g., \cite[Theorem~8.3.11]{Butkovic:2010} or \cite[Theorem~2.1]{delaPuente:2013}.
\end{proof}
Note that, even for general $B$, the Kleene star $B^*$ is the shortest path matrix for the digraph on $d$ nodes whose weights are given by the coefficients of $B$; see also \cite[\S3.4]{JoswigLoho:2016}.
A polytrope is \emph{isodiametric} if it arises from a tropically isodiametric matrix via Corollary~\ref{cor:polytrope}.
\begin{example}\label{exmp:isoplanar}
  For $d=3$ any isodiametric $\min$-standard matrix looks like
  \[
  B(\lambda) \ = \
  \begin{pmatrix}
    0 & 1 & 1 \\
    1 & 0 & \lambda \\
    1 & 2-\lambda & 0
  \end{pmatrix} \enspace ,
  \]
  where $0\leq\lambda\leq 2$.
  The planar polytropes which arise as the $\min$-tropical convex hulls of the columns of the matrices $B(\lambda)$ are shown in Figure~\ref{fig:isoplanar} for various values of $\lambda$.
  The red points mark the non-redundant generators, i.e., the columns, while the white points are the pseudo-vertices, generically.
  Going from $\min$ to $\max$ means to interchange the roles of the red and the white points.
  For the non-generic cases $\lambda=0$ and $\lambda=2$ the non-redundant generators for $\min$ and $\max$ agree.
\end{example}

\newcommand\isoplanar[1]{%
\begin{tikzpicture}[x  = {(1cm,0cm)},
                    y  = {(0cm,1cm)},
                    z  = {(0cm,0cm)},
                    scale = 0.9,
                    color = {lightgray}]

  \tikzstyle{pseudostyle} = [fill=white, draw=black, thin]
  \tikzstyle{vertexstyle} = [fill=red, draw=black, thin]
  \tikzstyle{linestyle} = [draw=black, thick]
  \tikzstyle{facestyle} = [fill=blue!60!green!40]

  \coordinate (v0) at ($ (-1,-1) + (0,2-#1) $);
  \coordinate (v1) at ($ (1,-1) + (0,2-#1) $);
  \coordinate (v2) at ($ (1,1) - (2-#1,0) $);
  \coordinate (v3) at (1,1);
  \coordinate (v4) at ($ (1,-1) - (2-#1,0) $);
  \coordinate (v5) at (-1,-1);

  \filldraw[facestyle,linestyle] (v0) \foreach \i in {2,3,1,4,5}{ -- (v\i) } -- cycle;

  \foreach \i in {1,2,5} { \filldraw[pseudostyle] (v\i) circle (1.5pt); }
  \foreach \i in {0,3,4} { \filldraw[vertexstyle] (v\i) circle (2pt); }
\end{tikzpicture}}

\begin{figure}[hbt]\centering
  \setlength{\tabcolsep}{18pt}
  \begin{tabular}{ccccc}
    \isoplanar{0} & \isoplanar{1/2} & \isoplanar{1} & \isoplanar{3/2} & \isoplanar{2} \\
    $\lambda=0$ & $\lambda=\frac{1}{2}$ & $\lambda=1$ & $\lambda=\frac{3}{2}$ & $\lambda=2$
  \end{tabular}
  \caption{Isodiametric polytropes in $\R^3/\R\1$}
  \label{fig:isoplanar}
\end{figure}

As shown in Example~\ref{exmp:isoplanar} the isodiametric polytropes (with fixed diameter) depend on one real parameter which is, moreover, bounded between zero and two.
In the general case we have $((d-1)^2-(d-1))/2=(d^2-3d)/2+1$ free parameters which are constrained by linear inequalities.
That is, the isodiametric polytropes in $\R^d/\R\1$ are parameterized by a convex polytope Iso$(d)$ of that dimension.
While this is naturally embedded in a real vector space of dimension $d^2$, we usually look at its faithful projection into the coordinate directions given by the coefficients $b_{ij}$ for $2\leq i<j \leq d-1$.
Notice that, up to this projection, Iso$(d)$ is contained in the dilate $2\cdot[0,1]^{d-1}$ of the unit cube by a factor of two.
The polytope Iso$(3)$ is the segment $[0,2]$.

\section{Combinatorics of near-isodiametric polytropes}
\noindent
Let $B=(b_{ij})$ be an $d{\times}d$-matrix which is tropically near-isodiametric with respect to $\min$.
We want to analyze the polytrope $P:=\tcone(B)$ seen as an ordinary convex polytope in $\R^{d-1}$.
That latter space is identified with $\R^d/\R\1$ via the map
\(
(x_1,x_2,\dots,x_d) \mapsto (x_2-x_1,\dots,x_d-x_1) 
\).
Our point of departure is the exterior description
\begin{equation}\label{eq:wdp}
  P(B) \ = \ \bigl\{ x\in\R^d \bigm| x_i-x_j \leq b_{ij} \text{ for } i\neq j \bigr\}
\end{equation}
as a \emph{weighted digraph polyhedron}; see \cite[\S5.2]{Tropical+Book} and \cite{JoswigLoho:2016}.
Since $B=B^*$ is a Kleene star all these inequalities are tight; this classical result follows, e.g., from \cite[2.3.3]{Gallai:1958}.
The one-dimensional lineality space of $P(B)$ is $\R\1$.
Let us set $P'(B):=P(B)/\R\1$.

\begin{proposition}\label{prop:facets}
  Assume that the inequalities (iv) in Corollary~\ref{cor:iso} are strict, i.e.,
  \[
    2<b_{ij}+b_{jk}+b_{ki}<4 \quad \text{for all } i,j,k \enspace .
  \]
  Then $P'(B)$ is an ordinary polytope of dimension $d-1$ with exactly $d(d-1)$ facets.
\end{proposition}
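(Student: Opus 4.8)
The plan is to extract the facets directly from the exterior description~(\ref{eq:wdp}). Throughout I use that $B$ is non-negative, satisfies (ii) and (iii), coincides with its Kleene star $B=B^*$, and --- by hypothesis --- obeys the strict form of (iv). For an ordered pair $(i,j)$ with $i\ne j$ set
\[
  H_{ij} \ := \ \{x\in\R^d \mid x_i-x_j=b_{ij}\}, \qquad F_{ij} \ := \ P(B)\cap H_{ij}\,.
\]
I must establish three things: (a) $P'(B)$ is a bounded polytope of dimension $d-1$; (b) every $F_{ij}$ is a facet; (c) the $d(d-1)$ hyperplanes $H_{ij}$ are pairwise distinct, so the facets are in bijection with the ordered pairs and their number is exactly $d(d-1)$.

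Step (a) is routine. Pairing the inequalities $x_i-x_j\le b_{ij}$ and $x_j-x_i\le b_{ji}$ shows that each difference $x_i-x_j$ stays in a bounded interval; hence $P'(B)$ is bounded and its lineality space is exactly $\R\1$. Full-dimensionality is equivalent to solvability of the \emph{strict} system $x_i-x_j<b_{ij}$, which by the Gordan--Farkas criterion for difference constraints holds iff every directed cycle has strictly positive total weight. A $2$-cycle has weight $2$ by (iii); a $3$-cycle has weight exceeding $2$ by strict (iv); and any longer cycle can be shortened --- without increasing its weight --- by replacing a length-two sub-path $u\to v\to w$ with the arc $u\to w$, using $b_{uw}\le b_{uv}+b_{vw}$ (valid since $B=B^*$), until a $2$- or $3$-cycle remains. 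So all cycle weights are positive, $\dim P'(B)=d-1$, and in particular $P(B)\neq\emptyset$.

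Step (b) is the heart of the matter. Fix $(i,j)$. By the affine form of Farkas' lemma, the inequality $x_i-x_j\le b_{ij}$ is implied by the \emph{remaining} inequalities of~(\ref{eq:wdp}) if and only if there are multipliers $\lambda_{k\ell}\ge0$, over ordered pairs $(k,\ell)\ne(i,j)$ with $k\ne\ell$, such that $\sum_{k,\ell}\lambda_{k\ell}(e_k-e_\ell)=e_i-e_j$ and $\sum_{k,\ell}\lambda_{k\ell}b_{k\ell}\le b_{ij}$. The first relation says that $\lambda$ is a flow of value one from $i$ to $j$ avoiding the arc $(i,j)$; decompose it into simple directed $i\to j$ paths and simple cycles, with non-negative coefficients whose path-part sums to $1$. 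Each such path has at least two arcs, hence weight at least $b_{im}+b_{mj}$ for its second vertex $m\notin\{i,j\}$ (shortcut the tail, using $B=B^*$), and $b_{im}+b_{mj}>b_{ij}$ is precisely the strict lower bound in (iv) for the triple $(i,m,j)$, rewritten by (iii). Each cycle contributes non-negative weight, as in Step (a). Therefore $\sum_{k,\ell}\lambda_{k\ell}b_{k\ell}>b_{ij}$, a contradiction; so $F_{ij}$ is a facet. Finally, for Step (c): $H_{ij}=H_{k\ell}$ forces $\{i,j\}=\{k,\ell\}$, while $H_{ij}\ne H_{ji}$ because $b_{ij}+b_{ji}=2\ne0$. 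Thus the $d(d-1)$ facets are pairwise distinct, and since every facet's affine hull occurs among the $H_{ij}$ there are no others.

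I expect Step (b) to be the only genuinely delicate point. The obvious witness for a relative-interior point of $F_{ij}$ --- the $j$th column of $B=B^*$ --- unfortunately satisfies \emph{every} inequality $x_k-x_j\le b_{kj}$ with equality, so a direct construction would have to perturb it while tracking which tight arcs may be slackened, which is exactly the acyclicity bookkeeping of Step (a). Passing to the dual, as above, replaces this by the single scalar inequality $b_{im}+b_{mj}>b_{ij}$, which is precisely what the strict form of (iv) supplies.
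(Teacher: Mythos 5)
Your proof is correct, but it reaches the conclusion by a genuinely different route from the paper. The paper's argument is primal and explicit: for each ordered pair $(i,j)$ it constructs, by perturbing the $j$th column of $B$ (setting $x_j=0$, $x_k=b_{kj}$ for $k\neq i,j$, and $x_i=b_{ij}+\epsilon$ with $\epsilon$ built from the slack $4-(b_{ij}+b_{jk}+b_{ki})>0$), a point satisfying every inequality of (\ref{eq:wdp}) except $x_i-x_j\leq b_{ij}$, and verifies the required inequalities one family at a time --- exactly the perturbation-with-bookkeeping you anticipate in your closing remark. You instead rule out a dual certificate: by the affine Farkas lemma, redundancy of $x_i-x_j\leq b_{ij}$ would yield a nonnegative unit flow from $i$ to $j$ avoiding the arc $(i,j)$ with weight at most $b_{ij}$; flow decomposition, the triangle inequality $b_{uw}\leq b_{uv}+b_{vw}$ coming from $B=B^*$ (Proposition~\ref{prop:B2}), and the single scalar consequence $b_{im}+b_{mj}>b_{ij}$ of strict (iv) (via (iii)) show every such flow has weight strictly larger than $b_{ij}$. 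Both proofs hinge on the same strict inequality from (iv), but yours trades the paper's case-by-case verification for duality plus cycle-positivity, and it makes explicit some points the paper leaves implicit (boundedness, $\dim P'(B)=d-1$ via strict feasibility, and pairwise distinctness of the $d(d-1)$ supporting hyperplanes using $b_{ij}+b_{ji}=2\neq 0$). What the paper's construction buys in exchange is the converse direction --- when some triple satisfies (iv) with equality the corresponding inequality is exhibited as redundant --- so it proves a facet characterization rather than only the count under strictness; your argument, as written, establishes precisely the stated proposition. (Only a cosmetic slip: the lineality space $\R\1$ belongs to $P(B)$, not to $P'(B)$.)
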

\begin{proof}
  Fix $i$ and $j$ distinct.
  We will show that the inequality $x_i-x_j \leq b_{ij}$ in (\ref{eq:wdp}) defines a facet of $P'(B)$ if and only if $2<b_{ij}+b_{jk}+b_{ki}<4$ for all $k$.

  Assume first that there is an index $k$ such that $b_{ij}+b_{jk}+b_{ki}=4$.
  Then we can add the inequalities $x_i-x_k \leq b_{ik}$ and $x_k-x_j \leq b_{kj}$ to obtain
  \[
    x_i - x_j \ \leq \ b_{ik}+b_{kj} \ \stackrel{\text{(iii)}}{=} \ 4-b_{ki}-b_{jk} \ = \ b_{ij} \enspace .
  \]
  That is, if we assume equality, then the inequality $x_i - x_j \leq b_{ij}$ is implied by other valid inequalities.
  Symmetrically, the equality $2=b_{ij}+b_{jk}+b_{ki}$ forces
  \[
    x_j - x_i \ \leq \ b_{jk}+b_{ki} \ = \ 2-b_{ij} \ \stackrel{\text{(iii)}}{=} \ b_{ji} \enspace .
  \]

  To prove the converse suppose first that $b_{ij}+b_{jk}+b_{ki}<4$ holds for all $k$.
  Then the number
  \[
    \epsilon \ := \ \min \biggl( 2, \ \min_k \bigl(4 - (b_{ij}+b_{jk}+b_{ki})\bigr) \biggr) \enspace ,
  \]
  is strictly positive.
  We will construct a point $x\in\R^d$ which satisfies all inequalities in (\ref{eq:wdp}) except for $x_i-x_j \leq b_{ij}$.
  Let us set
  \[
    x_i \ := \  b_{ij}+\epsilon \enspace, \quad x_j \ := \ 0 \enspace, \quad \text{and} \quad x_k \ := \ b_{kj} \ = \ 2-b_{jk}
  \]
  for all $k\neq i,j$.
  Then we find
  \[
    \begin{aligned}
      x_i - x_j \ &= \ b_{ij}+\epsilon \ > \ b_{ij} \enspace, \\
      x_j - x_i \ &= \ -b_{ij}-\epsilon \ \leq \ b_{ji}-2 < b_{ji} \enspace, \\
      x_i - x_k \ &= \ b_{ij}+\epsilon + b_{jk}-2 \ \leq \ 2 - b_{ki} \ = \ b_{ik} \enspace, \\
      x_k - x_i \ &= \ b_{kj} - b_{ij}-\epsilon \ \leq \ 2-b_{jk}-b_{ij} \ \leq \ b_{ki} \enspace, \\
      x_j - x_k \ &= \ b_{jk}-2 \ < \ b_{jk} \enspace, \\
      x_k - x_j \ &=  \ b_{kj} \enspace, \\
      x_k - x_\ell \ &= \ b_{kj} + b_{j\ell}-2 \ \leq \ 2 - b_{\ell k} \ = \ b_{k\ell} \enspace,
    \end{aligned}
  \]
  where $\ell\neq i,j,k$, and this shows that the inequality $x_i-x_j \leq b_{ij}$ is not redundant.
\end{proof}
Any real $d{\times}n$-matrix $M$ induces a height function on the vertices of the product of simplices $\Delta_{d-1}\times\Delta_{n-1}$.
The induced regular subdivision is dual to (the covector decomposition of) the conical tropical convex hull of the columns of $M$; see \cite[\S5.2]{Tropical+Book} and \cite{JoswigLoho:2016}.
In the \emph{generic} case that subdivision is a triangulation.
Our main result says that there are isodiametric matrices which are generic.
\begin{theorem}
  For each $n\geq3$ there exist isodiametric matrices $B$ such that the polytrope $P'(B)$ is a simple ordinary polytope with exactly $\tbinom{2d}{d}$ vertices.
  In this case, the regular subdivision of $\Delta_{d-1}\times\Delta_{d-1}$ induced by $B$ is a triangulation.
\end{theorem}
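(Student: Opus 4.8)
The plan is to show that, although the isodiametric equations are rigid, a \emph{generic} isodiametric matrix still induces a triangulation of $\Delta_{d-1}\times\Delta_{d-1}$, and then to read off the polytope theory from the duality recalled just before the theorem. First the bookkeeping: the set of $d\times d$ height matrices whose induced regular subdivision of $\Delta_{d-1}\times\Delta_{d-1}$ is a triangulation is open (it is the union of the open maximal cones of the secondary fan), and its complement lies in a finite union of hyperplanes, each contained in the circuit hyperplane $H_C=\{B:\ell_C(B)=0\}$ of a cycle $C$ of the complete bipartite graph $K_{d,d}$ (the cycles of $K_{d,d}$ index the circuits of the product of simplices, and $\ell_C$ is the alternating sum of the entries of $B$ over the cells of $C$). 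When $B$ lies in the triangulation locus and the inequalities (iv) of Corollary~\ref{cor:iso} are strict, the triangulation has exactly $\vol(\Delta_{d-1}\times\Delta_{d-1})=\tbinom{2d-2}{d-1}$ maximal unimodular simplices, each dual to a vertex of $P'(B)$, and unimodularity makes each such vertex lie on exactly $d-1$ of the $d(d-1)$ facets of Proposition~\ref{prop:facets}; hence $P'(B)$ is a simple $(d-1)$-polytope with $\tbinom{2d-2}{d-1}$ vertices, the largest number possible for a polytrope with $d$ tropical generators. So it remains to put one isodiametric matrix into the triangulation locus.

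For this I would show that the affine span $\mathcal A=\{B:b_{ii}=0,\ b_{ij}+b_{ji}=2\}$ of $\mathrm{Iso}(d)$ lies in none of the walls $H_C$; being irreducible, $\mathcal A$ then meets the dense open complement of their union and hence the nonempty relatively open set of isodiametric matrices with (iv) strict, which is what we want. Fix a cycle $C$. Using $b_{ii}=0$ and $b_{ij}+b_{ji}=2$ one checks that $\ell_C$ restricted to $\mathcal A$ is either nonconstant, or a nonzero constant --- in either case $\mathcal A\not\subseteq H_C$ and this $C$ is irrelevant --- or identically zero. The identically zero case is exactly where the isodiametric relations force $C$ flat; it cannot be perturbed away, and must instead be excluded combinatorially: such a $C$ never appears among the vertices of a cell of the regular subdivision of any isodiametric $B$, so $H_C$ is not a wall of the secondary fan near a triangulation. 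Granting this, a $B\in\mathrm{Iso}(d)$ avoiding the finitely many $H_C$ with $\ell_C\not\equiv 0$ on $\mathcal A$ has only forced-flat circuits, none of which sits in a cell, so its subdivision is a triangulation.

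The combinatorial exclusion uses the Kleene star identity $B\odot_{\min}B=B$ of Proposition~\ref{prop:B2}, which pins down when $b_{ij}=b_{ik}+b_{kj}$ can hold and hence which cells can be simultaneously tight on a lower supporting hyperplane. Unwinding ``$\ell_C\equiv 0$ on $\mathcal A$'' one finds that $C$ is invariant under the side-swapping involution of $K_{d,d}$ and passes \emph{non-consecutively} through two vertices sharing an index $m$ (so the cell $(m,m)$ is not a cell of $C$); the affine function matching the $B$-heights on the cells of $C$ then takes the value $b_{mp}+b_{pm}=2$ on that omitted diagonal cell, strictly above its height $b_{mm}=0$. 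Thus this function is never a lower supporting hyperplane, and $\conv(C)$ lies in no cell of the subdivision --- for \emph{every} isodiametric $B$. The base case $d=3$ is Example~\ref{exmp:isoplanar}: each $B(\lambda)$, $0<\lambda<2$, is generic, the forced-flat circuits of $K_{3,3}$ (the all-off-diagonal Hamilton cycle and the two-diagonal-cell Hamilton cycles) being harmless and the rest resolved.

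The main obstacle is making this circuit analysis complete and uniform in $d$: classifying the cycles $C$ with $\ell_C\equiv 0$ on $\mathcal A$ and verifying for each that some omitted diagonal cell witnesses the failure of the lower-support condition for every isodiametric matrix. This is the real content, because the relation $b_{ij}+b_{ji}=2$ is precisely what manufactures permanently flat circuits, so one has no option but to rule them out combinatorially, with $B\odot_{\min}B=B$ as the lever. An alternative, more computational route starts from the central isodiametric matrix with all off-diagonal entries $1$ (which is generic for $d=3$ but not for $d\ge 4$) and perturbs it by a small generic antisymmetric matrix, chosen transverse to all ``avoidable'' walls; but this still needs the same lemma that the forced-flat circuits are harmless. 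Everything else --- openness of the triangulation locus, the volume count, simplicity, and the facet count from Proposition~\ref{prop:facets} --- is routine.
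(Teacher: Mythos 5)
Your route through the secondary fan of $\Delta_{d-1}\times\Delta_{d-1}$ is genuinely different from the paper's, and it stalls exactly where you say it does. The reduction in your first paragraph is sound (a non-triangulation forces some circuit to be flat, so the bad locus sits inside the union of the hyperplanes $H_C$), but the whole weight of the proof then rests on the circuits with $\ell_C\equiv 0$ on $\mathcal A=\{b_{ii}=0,\ b_{ij}+b_{ji}=2\}$: you must either show there are none, or classify them and show each one is harmless for every isodiametric $B$. You do neither. The structural claim you make about such circuits (invariance under the transposition involution, a non-consecutively repeated index $m$, and the interpolating affine function taking the value $b_{mp}+b_{pm}=2$ at the omitted cell $(m,m)$) is asserted, not derived; note also that for that interpolating function to be determined at $(m,m)$ you need $(m,m)$ to lie in the affine span of the support of $C$ (e.g.\ $C$ containing $(m,p)$, $(p,m)$ and $(p,p)$), which is an additional combinatorial condition you never verify. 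Since you yourself flag this lemma as ``the real content,'' the proof does not close.

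The paper sidesteps the issue entirely, and the trick is worth internalizing: it does \emph{not} insist on genericity inside $\mathrm{Iso}(d)$. Starting from an isodiametric $B$ with the inequalities (iv) of Corollary~\ref{cor:iso} strict and positive off-diagonal entries (all off-diagonal entries equal to $1$ will do), it first perturbs by small skew-symmetric $E$ to conclude that $P'(B)$ is simple, and then perturbs by an \emph{arbitrary} small zero-diagonal $E$ --- leaving the isodiametric family, which is precisely what destroys your forced-flat circuits, since all $d^2-d$ off-diagonal heights become free parameters. Because $P'(B)$ is a simple polytope cut out by the $d(d-1)$ facet inequalities of Proposition~\ref{prop:facets}, small changes of the right-hand sides keep $P'(B+E)$ normally equivalent to $P'(B)$; the vertex count is therefore preserved, and a regular subdivision of $\Delta_{d-1}\times\Delta_{d-1}$ whose number of maximal cells equals the normalized volume must consist of unimodular simplices, i.e.\ be a triangulation. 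This transfers the genericity of $B+E$ back to $B$ without ever touching the secondary-fan walls that trouble you. (On the positive side, your count $\binom{2d-2}{d-1}$ is the correct one --- the $\binom{2d}{d}$ in the statement is a slip, as the hexagon for $d=3$ already shows --- and your observation that the relations $b_{ij}+b_{ji}=2$ are the source of the difficulty is exactly right; the fix is to relax them, not to fight them.)
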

\begin{proof}
  Let $B$ be an isodiametric $d{\times}d$-matrix for which the inequalities (iv) in Corollary~\ref{cor:iso} are strict and whose off-diagonal coefficients are strictly positive.
  By \ref{prop:facets} the ordinary polytope $P'(B)$ has $d(d-1)$ facets.
  Then there exists an $\epsilon>0$ such that, for each skew-symmetric $d{\times}d$-matrix $E=-\transpose{E}$ whose coefficients lie between $\pm\epsilon$, the ordinary sum $B+E$ is isodiametric, too, and the ordinary polytopes $P'(B)$ and $P'(B+E)$ are normally equivalent.
  It follows that $P'(B)$ must be a simple polytope.

  Now let $E$ be an arbitrary matrix with sufficiently small coefficients, but which is not necessarily skew symmetric but which has a zero diagonal.
  In this case the matrix $B+E$ is still near-isodiametric, but not necessarily isodiametric.
  Yet, $P'(B)$ is still normally equivalent to $P'(B+E)$.
  In particular $P'(B)$ and $P'(B+E)$ are combinatorially isomorphic.
  As we have sufficiently many free parameters for the choice of $E$ it follows that the regular subdivision of $\Delta_{d-1}\times\Delta_{d-1}$ induced by $B$ is a triangulation.
  Each such triangulation has $\tbinom{2d}{d}$ maximal cells, and these are dual to the vertices of $P'(B)$.
\end{proof}

\begin{example}
  The matrix
  \[
    B \ = \ \begin{pmatrix}
      0 & 1 & 1 & 1 \\
      1 & 0 & 5/4 & 3/4 \\
      1 & 3/4 & 0 & 5/4 \\
      1 & 5/4 & 3/4 &0
    \end{pmatrix}
  \]
  is a tropically isodiametric matrix, which is standard with respect to $\min$, and which is generic.
  The resulting polytrope $P'(B)$, shown in Figure~\ref{fig:maxiso4}, is combinatorially equivalent to the second example in \cite[Figure~2]{JoswigKulas:2010}.
 It belongs to Class~1 in the classification \cite[\S3.9]{JiminezDelaPuente:1205.4162}:  among the 12 ordinary facets there are three quadrangles, six pentagons and three hexagons, and there is no pair of adjacent hexagons.
\end{example}

\begin{figure}[hbt]
  \centering
  \includegraphics[width=.4\textwidth]{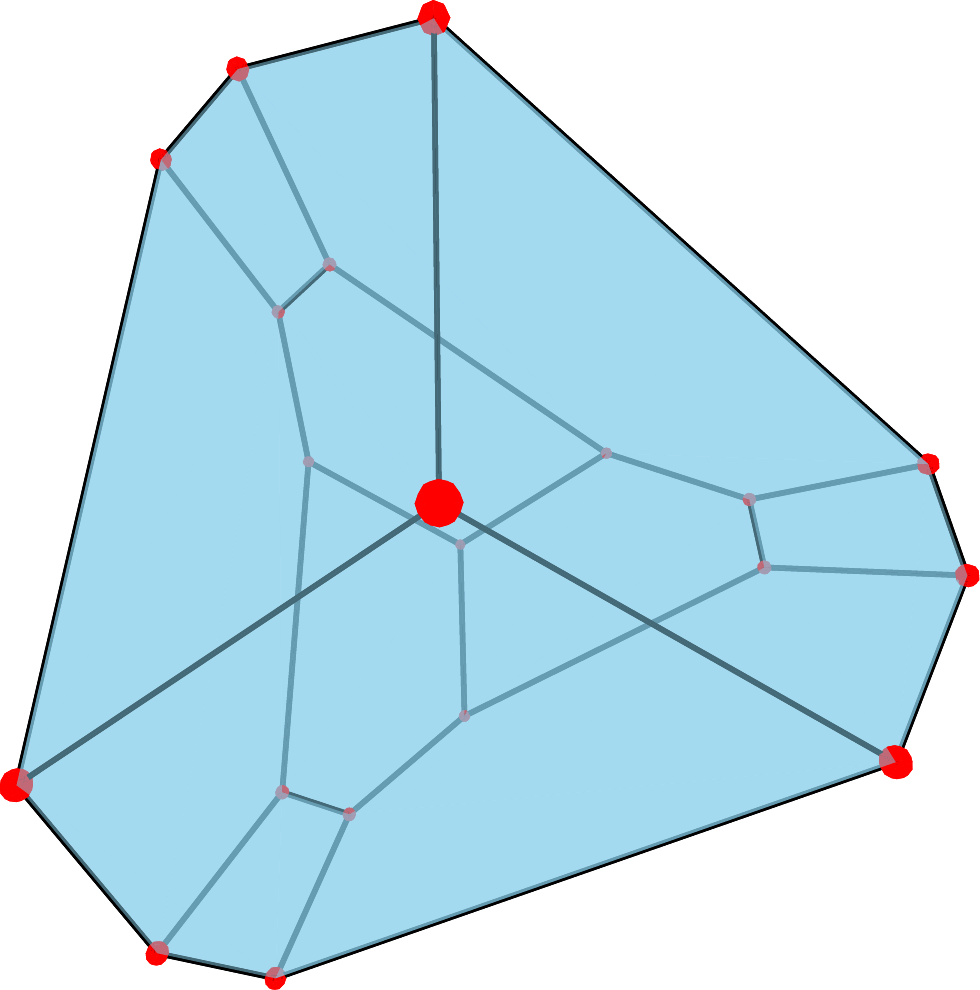}
  \caption{Isodiametric polytrope in $\R^4/\R1$ }
  \label{fig:maxiso4}
\end{figure}
\begin{remark}
  A computation with \texttt{polymake} \cite{DMV:polymake} reveals the following: 
  None of the other four combinatorial types of maximal polytropes for $n=4$ from \cite[Figure~2]{JoswigKulas:2010} admits a tropically isodiametric realization, nor does the sixth type, which was found by Jim\'{e}nez and de la Puente \cite[Example~21]{JiminezDelaPuente:1205.4162}.
\end{remark}

\section{Tropical volume arising from dequantization}\label{sec-dequantize}
We now investigate a different notion of ``volume'', which is also applicable to tropical geometry.
It arises from ``dequantization'', a term coined by Maslov for a procedure in which tropical objects are obtained as the log-limits of classical objects; see~\cite{kolomaslov}.
A related procedure is known as Viro's method~\cite{viro} in real algebraic geometry.

In this approach,
tropical numbers can be thought of as the images of the elements of a non-archimedean field under the valuation map.
{Following~\cite{alessandrini2013,ABGJ:1405.4161}, we will work with a field $\K$ of real valued functions in a real parameter $t$. We assume that $\K$ is a Hardy field whose elements are germs at infinity of real valued functions of  $t$ that are definable in a fixed polynomially bounded o-minimal structure. 
Then, the \emph{valuation} of a function $\f\in \K$ is defined by
\(
\val \f  \ := 
\lim_{t\to\infty}(\log t)^{-1}\log |\f(t)|
\).
The map $\val$ is a \emph{non-archimedean} valuation, meaning that
$\val 0=-\infty$,
\(
\val \f\g=\val\f+\val\g\),
and \(%
\val \f+\g \leq \max(\val \f, \val \g)\).
Moreover, the latter inequality becomes an equality if $\f,\g\in \Kplus$, the
subset of non-negative functions of $\K$. We will assume that every function $t^r$ with $r\in \R$ belongs to $\K$. Then, $\val$ yields a surjective morphism of semifields from $\Kplus$ to the tropical semifield $\trop$ (with ground set $\R\cup\{-\infty\}$ and maximum as addition). }

The notions of convex hull, polyhedra, etc., make sense over $\K$.
In particular, if $\puiseuxA=(\a_{ij})\in \K^{d\times m}$, we denote by  $\puiseuxP:=\conv\puiseuxA$ the polytope generated by the columns of $\puiseuxA${.}
By evaluating the matrix $\puiseuxA(t)=(\a_{ij}(t))$ at a real parameter $t$, we obtain a polytope $\puiseuxP_t:=\conv\puiseuxA(t)$, so $\puiseuxA$ encodes a parametric family of ordinary polytopes. 
We will denote by $\bar\puiseuxA$ the $(d+1)\times m$ matrix
obtained by adding an identically one row to the $d\times m$ matrix $\puiseuxA$, putting this new row at the top of the matrix. If $\puiseuxA$ has $m = d+1$ affinely independent columns, $\puiseuxP$ is a simplex with volume $(d!)^{-1}|\det \bar{\puiseuxA}|$. In general, the volume of $\puiseuxP$, denoted by $\vol \puiseuxP$ or $\vol \puiseuxA$, can be computed by triangulating the configuration of points given by columns of $\puiseuxA$ into simplices, just like over $\R$.

If $A=(a_{ij})\in \trop^{d\times m}$, we say that $\puiseuxA=(\a_{ij})\in \Kplus^{d\times n}$ is a \emph{lift} of $A$ if $\val \puiseuxA =A$. 
While we worked with tropical cones and projective coordinates
in Section~\ref{sec-tropd}, now it is more convenient to consider affine notions as follows.
We call \emph{affine tropical convex hull} of the columns of $A$, denoted as $\atconv A$, the set of vectors of the form $A\odot x$ where $x \in \trop^m$ is such
that $\max_i x_i=0$. We will now use the notation $\bar{A}$ to denote
the $(d+1)\times m$ matrix obtained by adding an identically zero row to $A$
(still on the top row).
Note also that if $A=\val \puiseuxA$,
we have $\val \bar \puiseuxA= \bar A$. 
In this way, when the entries of $A$ are finite, $\atconv A$ can be identified to the cross section
by the hyperplane $x_1=0$ of the conical tropical
convex hull, $\tcone \bar A$, defined in Section~\ref{sec-tropd}.

A result of Develin and Yu~\cite{DevelinYu:2007} implies that every affine tropical polytope $P$ is of the form $\val \puiseuxP$,
where $\puiseuxP= \conv\puiseuxA$ for some matrix $\puiseuxA$ with entries
in $\K$. 
This motivates the following definition of tropical
volumes of a polytope in terms of log-limits.

\begin{definition}
Let $A\in \trop^{d\times m}$.
The \emph{upper} and \emph{lower} \emph{dequantized tropical volumes} of $A$ are defined by
\begin{align*}
\qvol^+ A:= \sup\{\val \vol \puiseuxA\mid \val\puiseuxA = A\}\quad\text{and}\quad
\qvol^- A:= \inf\{\val \vol \puiseuxA\mid \val\puiseuxA = A\}\enspace,
\end{align*}
respectively.
\end{definition}
Given a square matrix $C=(c_{ij})$
with entries in $\trop$, we denote by $\tper C$ the tropical
determinant of $C$ (i.e., the value of the optimal
assignment problem with weights $c_{ij}$).
We have the following characterization of the upper
dequantized tropical volume. 
\begin{theorem}\label{th-carac}
If $A\in \trop^{d\times m}$, then
\[
\qvol^+ A \ = \ \max_{I\subset [m], |I|=d} \tper A[I] \enspace ,
\]
where $A[I]$ denotes the maximal submatrix of $A$ obtained
by selecting the columns in $I$.
\end{theorem}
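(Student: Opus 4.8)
The plan is to prove the two inequalities $\qvol^+ A \le \max_{I} \tper A[I]$ and $\qvol^+ A \ge \max_{I} \tper A[I]$ separately, exploiting the fact that the volume of $\puiseuxP = \conv\puiseuxA$ is computed by triangulating the point configuration of the columns of $\puiseuxA$ into $d$-simplices, and that for a simplex on column index set $I \cup \{j\}$ (with the homogenizing all-ones row) the volume is $(d!)^{-1}\lvert\det\bar\puiseuxA[I\cup\{j\}]\rvert$. The key bridge between the classical and tropical sides is the standard fact (Develin--Yu style; see also the valuation axioms recalled just above) that for any square matrix over $\Kplus$ the valuation of its determinant is at most its tropical determinant, $\val\det\puiseuxM \le \tper(\val\puiseuxM)$, since $\det$ is a signed sum of products and $\val$ is subadditive under $\max$ while multiplicative; and that equality holds generically, i.e.\ one can choose the lift so that the optimal assignment term strictly dominates.

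For the upper bound $\qvol^+ A \le \max_I \tper A[I]$, I would take any lift $\puiseuxA$ with $\val\puiseuxA = A$ and any triangulation of the columns of $\bar\puiseuxA$ into simplices; the volume of $\puiseuxP$ is the sum of the simplex volumes, each of the form $(d!)^{-1}\lvert\det\bar\puiseuxA[J]\rvert$ for a $(d+1)$-subset $J$ of columns. Since $\val$ of a sum of non-negative (or, after taking absolute values, one must be slightly careful with signs — here I would instead bound $\val$ of the sum by the max of the $\val$'s of the summands, which is legitimate because $\val(f+g)\le\max(\val f,\val g)$ and the volume is a sum of non-negative reals) is at most the maximum of the valuations of the summands, we get $\val\vol\puiseuxA \le \max_J \val\det\bar\puiseuxA[J]$. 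Now $\bar\puiseuxA[J]$ has the all-ones top row, so expanding the determinant along that row, or simply observing it is a signed sum of $d\times d$ minors of $\puiseuxA$ on column sets $I\subset J$ with $\lvert I\rvert=d$, gives $\val\det\bar\puiseuxA[J] \le \max_{I\subset J}\val\det\puiseuxA[I] \le \max_{I\subset J}\tper A[I] \le \max_{\lvert I\rvert=d}\tper A[I]$. Taking the supremum over lifts yields the upper bound.

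For the lower bound I must exhibit a single lift $\puiseuxA$ of $A$ achieving $\val\vol\puiseuxA = \max_I \tper A[I]$. Fix an index set $I_0$ attaining the maximum, and let $\sigma$ be an optimal assignment for $A[I_0]$. I would choose a \emph{generic} lift: set $\a_{ij} = c_{ij}\,t^{a_{ij}}$ with coefficients $c_{ij}>0$ chosen generically (say algebraically independent over $\Q$), which is permissible since every $t^r$ lies in $\K$ and $\K$ is closed under the field operations. Genericity guarantees two things: first, every $d\times d$ minor $\det\puiseuxA[I]$ satisfies $\val\det\puiseuxA[I] = \tper A[I]$ exactly (the unique or at least the dominant optimal-assignment monomial is not cancelled), and likewise every $\det\bar\puiseuxA[J]$ has valuation equal to the corresponding tropical quantity; second — and this is the delicate point — in the triangulation of the columns of $\bar\puiseuxA(t)$ induced for large $t$, no cancellation occurs when summing the simplex volumes, because the valuations of volumes of distinct simplices are the values of an affine-linear functional on a regular subdivision and the top-dimensional cells' contributions, being positive reals, add without loss of leading order once we know the maximal one is attained. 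Concretely, $\val\vol\puiseuxA = \max_J \val\det\bar\puiseuxA[J]$ (the sum of positive reals whose valuations are known has valuation equal to the largest of them, since there is no sign cancellation), and one checks $\max_J\val\det\bar\puiseuxA[J] \ge \val\det\bar\puiseuxA[I_0\cup\{j\}]$ for an appropriate $j\notin I_0$, which in turn equals $\tper A[I_0] = \max_I\tper A[I]$ by a direct expansion using $\sigma$.

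The main obstacle is the second point in the lower-bound argument: controlling the absence of cancellation in the \emph{sum over simplices of a triangulation}, rather than in a single determinant. The clean way around it is to note that $\vol\puiseuxA(t)$ is itself a definable function of $t$ in the Hardy field $\K$ (it is a finite sum of absolute values of polynomials in the entries), so its valuation is well-defined and equals $\limsup$ of $(\log t)^{-1}\log$ of the individual nonzero positive summands; since all summands are non-negative, $\val(\sum_J v_J) = \max_J \val v_J$ with \emph{no} cancellation — this is exactly the equality case of the ultrametric inequality for non-negative elements of $\Kplus$ recalled in the text. Thus the lower bound reduces to the purely combinatorial claim that some simplex in \emph{some} triangulation (equivalently, some $(d+1)$-subset $J$ of columns forming a simplex of positive volume) has $\val\det\bar\puiseuxA[J]$ equal to $\max_I\tper A[I]$, which follows from the generic-lift analysis of minors. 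Combining the two bounds gives the theorem.
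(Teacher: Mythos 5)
Your proof is correct and follows essentially the same route as the paper's: the upper bound via a triangulation, subadditivity of $\val$, and the tropical Laplace expansion of $\tper \bar{A}[J]$ along the homogenizing row; the lower bound by exhibiting a lift whose minor on an optimal column set $I_0$ (augmented by one extra column) has valuation exactly $\tper A[I_0]$, combined with positivity/monotonicity of the volume to avoid cancellation in the sum over simplices. The only real difference is in how the non-degenerate lift is produced: you take generic coefficients $\a_{ij}=c_{ij}t^{a_{ij}}$ with the $c_{ij}$ algebraically independent, so that the leading coefficient of $\det\bar{\puiseuxA}[J]$ is a nonzero polynomial evaluated at an algebraically independent point, whereas the paper uses the explicit lift $\a_{i\sigma(i)}=Nt^{a_{i\sigma(i)}}$ and $\a_{ij}=t^{a_{ij}}$ otherwise, with $N\geq(d+1)!$, and bounds the non-optimal monomials directly--both devices are valid and serve the identical purpose.
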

\begin{proof}
Consider an arbitrary lift $\puiseuxA$ of $A$. Let us choose a triangulation of the configuration of points determined by the columns of $\puiseuxA$. We identify this triangulation to a collection $\mathcal{J}$ of
subsets of $d+1$ elements of $[m]$, so that for every $J\in \mathcal{J}$,
the columns of $\puiseuxA$ of index in $J$ are the vertices
of precisely one simplex of this triangulation. Using{ the properties of the valuation,}
we observe that for all subsets $J\subset [m]$ with $d+1$ elements, we have $\val |\det \bar{\puiseuxA}[J]|\leq \tper \bar{A}[J]$.
Hence, we deduce from $\vol \puiseuxA
= \sum_{J\in \mathcal{J}} (d!)^{-1}|\det \bar\puiseuxA[J]|
\leq \sum_{J\subset[m],\;|J|=d+1} (d!)^{-1}|\det \bar\puiseuxA[J]|$
that $\val \vol \puiseuxA\leq \max_{J\subset[m],\;|J|=d+1} \tper \bar{A}[J]$.
Moreover, expanding the latter tropical determinant with respect to the first
row of $\bar{A}$, we get $\tper \bar{A}[J]= \max_{I\subset J, |I|=d} \tper A[I]$,
and so $\val \vol \puiseuxA\leq \max_{I\subset[m],\;|I|=d} \tper A[I]$.

To show the opposite inequality, we assume that the value
of the latter maximum differs from $-\infty$ (otherwise there is nothing to prove), and take $I$ such that $\tper A[I]$ reaches this
maximum. After reordering the columns of $A$, we may assume that $I=[d]$
and that $\tper\bar{A}[J]=\tper A[I]$ where $J:=[d+1]$.
We choose $\sigma$, a maximizing permutation
in the optimal assignment problem associated to $A[I]$. We now
choose a lift $\puiseuxA=(\a_{ij})$ of $A$, such that 
$\a_{ij}=t^{a_{ij}}$ if $j\neq\sigma(i)$, and $\a_{i\sigma(i)}=N t^{a_{i\sigma(i)}}$,
where $N>1$ is a fixed parameter. We have $|\det \bar{\puiseuxA}[J]|
\geq |N^{d}t^{\tper A[I]}|- |p|$, where $\p$ is the sum
of monomials of the form $\pm \prod_i \a_{i\pi(i)}$, over all permutations
$\pi$ of $[d+1]$ that differ from $\sigma$. Each of these monomials 
can be written as $\pm bt^v$
where $v\leq \tper A[I]$ and $0\leq b\leq N^{d-1}$. 
Choosing $N$ sufficiently large
($N\geq (d+1)!$ suffices),
we deduce that the leading exponent of $|\det \bar{\puiseuxA}[J]|$
is still $\tper A[I]$, i.e., 
$\val |\det \bar{\puiseuxA}[J]| = \tper {A}[I]$,
and so, $\val \vol \puiseuxA \geq \tper {A}[I]$.
\end{proof}
We shall say that a matrix $A$ is 
\emph{tropically sign-generic} if, in the optimal assignment problem associated to any maximal square submatrix of $A$, all the optimal permutations have the same
parity. 
\begin{theoremdefinition}[Dequantized tropical volume]
If $A\in \trop^{d\times m}$ is such that $\bar{A}$ is tropically sign generic,
then, $\qvol^+ A=\qvol^-A$, and we denote by $\qvol A$, 
the \emph{dequantized tropical volume} of $A$, this common value.
Moreover, for any lift $\puiseuxA$ of $A$, we have
\begin{equation}
\lim_{t\to\infty} \frac{\log \vol \conv \puiseuxA(t) }{\log t} \ = \ \qvol A
\enspace .
\label{e-limvol}
\end{equation}
\end{theoremdefinition}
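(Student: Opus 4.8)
The plan is to combine Theorem~\ref{th-carac}, which already identifies $\qvol^+A$ with $\max_{|I|=d}\tper A[I]$, with a no-cancellation lemma for the maximal minors of an \emph{arbitrary} lift; the tropical sign-genericity of $\bar A$ is used precisely to secure that lemma. We may assume $m\geq d+1$ and $\qvol^+A\neq-\infty$, the remaining cases being degenerate: if $m\leq d$ then $\conv\puiseuxA(t)$ has dimension below $d$ for every $t$, so $\vol\conv\puiseuxA\equiv0$ and $\qvol^+A=\qvol^-A=-\infty$, in agreement with \eqref{e-limvol}; and if $\qvol^+A=-\infty$ then a fortiori $\qvol^-A=-\infty$ and the same conclusion holds. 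So fix $I^*\subset[m]$ with $|I^*|=d$ attaining $\qvol^+A=\tper A[I^*]$, enlarge it to a $(d+1)$-set $J^*\supseteq I^*$, and observe, by expanding a tropical determinant along the identically zero top row of $\bar A$ exactly as in the proof of Theorem~\ref{th-carac}, that $\tper\bar A[J^*]=\max_{I\subset J^*,\,|I|=d}\tper A[I]=\tper A[I^*]=:W$, a finite number.

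The heart of the matter is the claim that $\val|\det\bar{\puiseuxA}[J^*]|=W$ for \emph{every} lift $\puiseuxA$ of $A$. I would expand $\det\bar{\puiseuxA}[J^*]=\sum_{\pi}\sgn(\pi)\prod_i(\bar{\puiseuxA}[J^*])_{i,\pi(i)}$ over permutations $\pi$ of $[d+1]$. Since $\bar{\puiseuxA}$ has entries in $\Kplus$, each summand is a non-negative germ whose valuation is the assignment weight $w(\pi)$ of $\pi$ in $\bar A[J^*]$ and whose leading coefficient is a \emph{strictly positive} real whenever $w(\pi)\neq-\infty$; the permutations with $w(\pi)<W$ contribute nothing at order $t^W$. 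By tropical sign-genericity of $\bar A$, all permutations attaining $w(\pi)=W$ — the optimal assignments of $\bar A[J^*]$ — share one sign $\varepsilon\in\{\pm1\}$, so their leading terms are all $\varepsilon$ times a positive real times $t^W$ and add without cancellation; hence the leading coefficient of $\det\bar{\puiseuxA}[J^*]$ is $\varepsilon$ times a positive real, and $\val|\det\bar{\puiseuxA}[J^*]|=W$. This is the main obstacle: without sign-genericity, leading coefficients of optimal assignments of opposite parity could cancel and drop the valuation strictly below $W$, and in fact $\qvol^+A=\qvol^-A$ fails in general.

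Granting the claim, fix an arbitrary lift $\puiseuxA$. For $t$ large, $\det\bar{\puiseuxA}[J^*](t)\neq0$, so the columns of $\puiseuxA(t)$ indexed by $J^*$ are affinely independent and span a $d$-simplex of volume $(d!)^{-1}|\det\bar{\puiseuxA}[J^*](t)|$ contained in $\conv\puiseuxA(t)$. Monotonicity of $d$-dimensional volume under inclusion and of $\val$ on positive germs then gives $\val\vol\puiseuxA\geq\val|\det\bar{\puiseuxA}[J^*]|=W=\qvol^+A$, while the reverse inequality is immediate from the definition of $\qvol^+A$ as a supremum over lifts. Hence $\val\vol\puiseuxA=\qvol^+A$ for every lift, so $\qvol^-A=\qvol^+A$, and we denote this common value by $\qvol A$.

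Finally, I would upgrade this valuation identity to \eqref{e-limvol}. For a fixed lift $\puiseuxA$, o-minimality makes the sign of every maximal minor $\det\bar{\puiseuxA}[J](t)$ eventually constant, so the oriented matroid of the column configuration stabilizes and one triangulation $\mathcal{J}$ computes $\vol\conv\puiseuxA(t)=\sum_{J\in\mathcal{J}}(d!)^{-1}|\det\bar{\puiseuxA}[J](t)|$ for all large $t$. As $\K$ is a field of germs closed under absolute value, the right-hand side is an element of $\K$, so $\val$ applies to it, and since $\val$ is by definition $\lim_{t\to\infty}(\log t)^{-1}\log\vol\conv\puiseuxA(t)$, that limit exists and equals $\qvol A$ by the previous paragraph. (All summands being non-negative, one also reads off the triangulation-free formula $\qvol A=\max_{J\in\mathcal{J}}\tper\bar A[J]$.) The Hardy-field bookkeeping in this last step is routine given the definability hypotheses on $\K$; the one delicate point of the argument is the anti-cancellation claim of the second paragraph.
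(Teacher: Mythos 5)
Your proposal is correct and follows the same route as the paper's (very terse) proof: choose $J$ with $\qvol^+A=\tper\bar A[J]$ via Theorem~\ref{th-carac}, use sign-genericity to exclude cancellation among the maximal-valuation terms of $\det\bar{\puiseuxA}[J]$ for an arbitrary lift, and translate the resulting valuation identity into the limit. The details you supply beyond the paper's two-line argument --- the degenerate cases, the inclusion/monotonicity bound $\vol\puiseuxA\geq(d!)^{-1}\,|\det\bar{\puiseuxA}[J^*]|$, and the o-minimality argument for the existence of the limit --- are exactly what the paper leaves implicit, and they are right (modulo the harmless imprecision that a germ in a Hardy field need not have a ``leading coefficient''; the cancellation argument really runs on $\val(\f+\g)=\max(\val\f,\val\g)$ for non-negative germs plus the strict valuation gap to the non-optimal permutations).
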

\begin{proof}
It follows from the proof of Theorem~\ref{th-carac} that we can choose $J\subset [m]$, $|J|=d+1$ such that $\qvol^+A = \tper \bar{A}[J]$. Since $\bar{A}$ is tropically sign generic, for any lift $\puiseuxA$ of $A$, all the monomials of maximal valuation in the determinant expansion of $\det \bar{\puiseuxA}[J]$ have the same sign, and so $\val \det \bar{\puiseuxA}[J]=\tper \bar{A}[J]$. It follows
that $\qvol^+ A=\qvol^-A$. The identity involving the limit is an immediate
translation of this fact.
\end{proof}
The next proposition shows that $\qvol A$ depends only of the affine tropical convex hull of $A$.
\begin{proposition}\label{prop-intrinsic}
Let $A\in \trop^{d\times m}$. Suppose that $A$ or $\bar{A}$ is tropically sign generic, that $B\in \trop^{d\times p}$, and that $\atconv(A)\subset \atconv(B)$.
Then $\qvol^+(A)\leq \qvol^+(B)$.
In particular, if $\atconv(A)=\atconv(B)$ and if $\bar{A},\bar{B}$ are both tropically sign generic, then $\qvol(A)=\qvol(B)$.
\end{proposition}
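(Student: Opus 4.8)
The plan is to argue at the level of lifts over $\K$ and then use the sign-genericity hypothesis to pass back to $\qvol^+$. The first step is to rewrite the inclusion algebraically: since the columns of $A$ generate $\atconv(A)$ and $\atconv(B)$ is closed under the affine tropical combinations $y\mapsto B\odot y$ with $\max_k y_k=0$, the hypothesis $\atconv(A)\subseteq\atconv(B)$ amounts to the existence of a matrix $X=(x_{kj})\in\trop^{p\times m}$ with $\max_k x_{kj}=0$ for every $j$ and $A=B\odot X$ (tropical matrix product). Next I would lift this relation: fix any lift $\bm{\mathcal B}\in\Kplus^{d\times p}$ of $B$, lift $X$ entrywise by $\puiseuxX_{kj}:=t^{x_{kj}}$ (and $0$ when $x_{kj}=-\infty$), rescale each column of $\puiseuxX$ so that its entries sum to $1$ — harmless since, by $\max_k x_{kj}=0$, the rescaling factor has valuation $0$ — and set $\puiseuxA:=\bm{\mathcal B}\puiseuxX$, now an \emph{ordinary} matrix product.

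Two verifications then carry the argument. First, each entry $\puiseuxA_{ij}=\sum_k\bm{\mathcal B}_{ik}\puiseuxX_{kj}$ is a sum of non-negative germs, so the ultrametric inequality on $\Kplus$ holds with equality and $\val\puiseuxA_{ij}=\max_k(b_{ik}+x_{kj})=a_{ij}$; hence $\puiseuxA$ is a lift of $A$. Second, since the columns of $\puiseuxX$ are stochastic, every column of $\puiseuxA$ is an ordinary convex combination of columns of $\bm{\mathcal B}$, so $\conv\puiseuxA(t)\subseteq\conv\bm{\mathcal B}(t)$ and therefore $\vol\conv\puiseuxA(t)\le\vol\conv\bm{\mathcal B}(t)$ for all $t$; applying the (monotone) valuation and using that $\bm{\mathcal B}$ is a lift of $B$ yields $\val\vol\puiseuxA\le\val\vol\bm{\mathcal B}\le\qvol^+(B)$. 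Finally, the sign-genericity hypothesis enters through the Theorem-Definition above, equation~\eqref{e-limvol}: when $\bar A$ is tropically sign generic, the volume valuation $\val\vol\puiseuxA$ is the same for \emph{every} lift $\puiseuxA$ of $A$, equal to the common value $\qvol(A)=\qvol^+(A)$, so the bound just obtained gives $\qvol^+(A)\le\qvol^+(B)$ (the case in which $A$ rather than $\bar A$ is sign generic is analogous). For the last assertion, if $\atconv(A)=\atconv(B)$ and $\bar A,\bar B$ are both sign generic, applying the inequality in both directions gives $\qvol^+(A)=\qvol^+(B)$, while sign-genericity collapses $\qvol^+$ and $\qvol^-$ to $\qvol$ on each side, so $\qvol(A)=\qvol(B)$.

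The main obstacle is the mismatch between the construction, which produces only one favourable lift $\puiseuxA$ of $A$, and the definition of $\qvol^+(A)$ as a supremum over \emph{all} lifts: a priori another lift of $A$ could have a polytope of much larger volume that is not contained in any lifted $B$-polytope. This is exactly the point that sign-genericity removes, by forcing the log-limit of the volume to be independent of the lift; without it the argument only yields $\qvol^-(A)\le\qvol^+(B)$. A smaller, routine matter is to confirm that the column normalization of $\puiseuxX$ does not perturb valuations and that $\val\puiseuxA_{ij}$ equals $a_{ij}$ exactly rather than merely $\le a_{ij}$, both of which come from the equality case of the ultrametric inequality for non-negative germs recorded at the start of this section.
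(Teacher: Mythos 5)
Your route is genuinely different from the paper's. The paper stays entirely on the tropical side: it combines the formula $\qvol^+A=\max_I\tper A[I]$ from Theorem~\ref{th-carac} with the tropical Binet--Cauchy identity applied to the factorization $A=B\odot C$, and sign-genericity is used precisely to turn that identity into the needed inequality. You instead lift the inclusion to an honest containment $\conv\puiseuxA(t)\subseteq\conv\bm{\mathcal{B}}(t)$ of ordinary polytopes and use monotonicity of volume. Your construction of the lift $\puiseuxA=\bm{\mathcal{B}}\puiseuxX$ with $\puiseuxX$ column-stochastic is correct (the valuation checks you flag are routine because all germs involved are non-negative, so the ultrametric inequality is an equality), and in the case where $\bar A$ is tropically sign generic the Theorem-Definition does make $\val\vol\puiseuxA$ independent of the lift; that branch, and with it the final assertion about $\qvol(A)=\qvol(B)$, is sound and arguably more geometric than the paper's computation.

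The gap is your parenthetical claim that the case where $A$ (rather than $\bar A$) is sign generic is ``analogous''. It is not: sign-genericity of $A$ does not imply sign-genericity of $\bar A$, and the lift-independence you invoke is a statement about $\bar A$. Already for $A=(0\ 0)\in\trop^{1\times2}$ every maximal ($1\times1$) submatrix is trivially sign generic, yet $\bar A$ is the all-zero $2\times2$ matrix, and one checks $\qvol^+A=0$ while $\qvol^-A=-\infty$. Taking $B=(0\ 0\ 0)$ and the all-zero $X$ as the witness of $\atconv(A)\subseteq\atconv(B)$, your construction produces a $\puiseuxA$ whose two columns both equal the barycenter of the columns of $\bm{\mathcal{B}}$, so $\vol\conv\puiseuxA=0$ and the argument delivers only $\qvol^-(A)\leq\qvol^+(B)$ --- exactly the weaker conclusion you yourself identify --- rather than the asserted $\qvol^+(A)\leq\qvol^+(B)$. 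To repair this branch you would need either to choose $X$ and its lift so that $\puiseuxA$ actually attains the supremum defining $\qvol^+(A)$ (essentially redoing the lower-bound construction from the proof of Theorem~\ref{th-carac} inside $\conv\bm{\mathcal{B}}$), or to argue tropically as the paper does, where sign-genericity of the maximizing submatrix $A[I]$ is exactly what makes the Binet--Cauchy formula applicable.
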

This proposition will allow us to define, 
for those tropical polytopes that can be written as $P=\atconv(A)$ with $A$ tropically sign generic, the dequantized volume
$\qvol(P):=\qvol(A)$.
\begin{proof}
Since $\atconv(A)\subset\atconv(B)$, we can write $A=B \odot C$, for some matrix $C\in\trop^{p\times m}$ whose entries are non-positive.
Let $I\subset [m]$ of cardinality $d$ be such that in the optimal
assignment problem with weight matrix $A[I]$, all the optimal
permutations have the same parity. 
Then, the tropical analogue of the Binet--Cauchy formula~\cite[Ex.~3.7]{agg09} 
yields $\tper A[I] = \max_K (\tper B[K] + \tper C[K,I])$, 
the maximum being taken over the $d$-element subsets $K\subset [p]$,
where $C[K,I]$ denotes the $K\times I$ submatrix of $C$.
Then, we deduce from the characterization of $\qvol^+$ in Theorem~\ref{th-carac} that if $A$ is tropically sign generic, then $\qvol^+ A\leq \qvol^+ B$. 

Suppose now that $\bar{A}$ (rather than $A$) is tropically sign generic. 
Arguing as in the proof of Theorem~\ref{th-carac}, we can
assume that $\qvol^+A = \tper \bar{A}[J]= \tper A[I]$ where $I=[d]$
and $J=[d+1]$. In the optimal assignment problem 
with weight matrix $\bar A[J]$, the optimal permutations have
the same parity. By expanding $\tper \bar{A}[J]$ with respect to the first
row, we see that the same property must hold for the optimal
assignment problem with weight matrix $A[I]$. Then, we conclude as
in the first part of the proof.
\end{proof}
\begin{example}
The tropical genericity condition cannot be dispensed with. Consider
\[
A=\left(\begin{matrix}0& 0 & 0\\ 0 & 0 & 0\end{matrix}\right)
\quad \text{and} \quad
B = \left(\begin{matrix} 0& -1 & -2\\ 0 & -2 & -4\end{matrix}\right)
\enspace. 
\]
We have $\atconv A\subset \atconv B$.
However, $\qvol^+A = 0$, whereas $\qvol^+ B=-1$.
\end{example}
\begin{corollary}
Let $A\in \trop^{d\times m}$, $P:=\atconv A$,
$B\in \trop^{d\times p}$, $Q:=\atconv B$, $C:=(A,B)$ and suppose that $\bar{C}$ is tropically sign generic.
Then
\[ \qvol\bigl(\atconv(P \cup Q)\bigr) \ = \ \max\bigl(\qvol(P),\qvol(Q)\bigr)
\enspace .\]
\end{corollary}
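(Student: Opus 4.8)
The plan is to push everything onto the single matrix $C=(A,B)$, and then to play the monotonicity of $\qvol^{+}$ against its combinatorial description. First note that the affine tropical convex hull is a closure operator, so $\atconv(P\cup Q)=\atconv\bigl(\atconv(A)\cup\atconv(B)\bigr)=\atconv(A\cup B)=\atconv(C)$. Also $\bar A$ and $\bar B$ arise from $\bar C$ by deleting columns, so every maximal square submatrix of $\bar A$ or of $\bar B$ is one of $\bar C$; hence the tropical sign-genericity of $\bar C$ is inherited by $\bar A$ and $\bar B$. Consequently (by the Theorem-Definition) all of $\qvol(C)=\qvol(\atconv(P\cup Q))$, $\qvol(A)=\qvol(P)$ and $\qvol(B)=\qvol(Q)$ are defined and coincide with the corresponding $\qvol^{+}$.

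The inequality ``$\geq$'' is then immediate: $P=\atconv(A)\subseteq\atconv(C)$ and $\bar A$ is tropically sign generic, so Proposition~\ref{prop-intrinsic} gives $\qvol^{+}(A)\leq\qvol^{+}(C)$, and symmetrically $\qvol^{+}(B)\leq\qvol^{+}(C)$; hence $\max(\qvol(P),\qvol(Q))\leq\qvol(\atconv(P\cup Q))$.

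For ``$\leq$'' I would invoke the characterization of Theorem~\ref{th-carac}, $\qvol^{+}(C)=\max_{I\subseteq[m+p],\,|I|=d}\tper C[I]$, and try to show $\tper C[I]\leq\max(\qvol^{+}A,\qvol^{+}B)$ for every $d$-element column set $I$. Writing $I=I_{A}\sqcup I_{B}$ according to whether a column comes from $A$ or from $B$: if $I_{B}=\emptyset$ then $\tper C[I]=\tper A[I]\leq\qvol^{+}A$, symmetrically if $I_{A}=\emptyset$, so only the mixed case is at issue. Fixing an optimal assignment realising $\tper C[I]$, it splits the row set $[d]$ into $R_{A}\sqcup R_{B}$ with $|R_{A}|=|I_{A}|$, and a rearrangement argument shows $\tper C[I]=\tper A[R_{A},I_{A}]+\tper B[R_{B},I_{B}]$. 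One then has to combine sign-genericity of $\bar C$ with the tropical Binet--Cauchy / Laplace identity used in the proof of Proposition~\ref{prop-intrinsic} to conclude that this sum cannot exceed $\max(\qvol^{+}A,\qvol^{+}B)$; equivalently, on the side of the lifts, that for a lift $\puiseuxC=(\puiseuxA,\puiseuxB)$ the volume $\vol\conv\puiseuxC(t)=\vol\conv(\puiseuxP_{t}\cup\puiseuxQ_{t})$ has leading exponent no larger than $\max(\val\vol\puiseuxA,\val\vol\puiseuxB)$.

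The main obstacle is exactly this last point. In the classical world the convex hull of two bodies can dwarf both in volume, so something genuinely tropical must be used: one has to show that the ``mixed'' simplices in a triangulation of $\conv(\puiseuxP_{t}\cup\puiseuxQ_{t})$ --- those spanned by generators of both $P$ and $Q$ --- carry volume only of the order governed by $\max(\qvol(P),\qvol(Q))$, and this is precisely where the tropical sign-genericity of $\bar C$ must forbid the cancellations (and the assignment ``ties'') that would otherwise let such a simplex beat both homogeneous bounds. I would expect to spend most of the effort making this mixed-simplex estimate precise; the two inclusions of $\qvol$ above are routine once it is in hand.
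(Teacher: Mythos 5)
Your reduction to the concatenated matrix $C=(A,B)$, the observation that sign-genericity of $\bar C$ is inherited by $\bar A$ and $\bar B$ (for $m,p\geq d+1$), and the inequality $\max(\qvol P,\qvol Q)\leq\qvol(\atconv(P\cup Q))$ via Proposition~\ref{prop-intrinsic} are all fine. But the step you correctly single out as ``the main obstacle'' --- showing $\tper C[I]\leq\max(\qvol^{+}A,\qvol^{+}B)$ for a mixed column set $I$ --- is not merely technical: it fails under the stated hypotheses, so no refinement of your mixed-simplex estimate will close the gap. Take $d=2$ and
\[
A=\begin{pmatrix}0&1&2\\0&-1&-2\end{pmatrix},\qquad B=\begin{pmatrix}10&11&12\\-10&-11&-12\end{pmatrix},
\]
so that every column of $C$ has the form $(s,-s)$ with $s\in\{0,1,2,10,11,12\}$. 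For two such columns, $\tper\bigl(\begin{smallmatrix}s&s'\\-s&-s'\end{smallmatrix}\bigr)=|s-s'|$, so Theorem~\ref{th-carac} gives $\qvol^{+}A=\qvol^{+}B=2$, while the mixed pair $s=0$, $s'=12$ gives $\qvol^{+}C\geq 12$. Each maximal ($3\times3$) square submatrix of $\bar C$ has columns $(0,s,-s)$, $(0,s',-s')$, $(0,s'',-s'')$ with $s<s'<s''$ distinct; its optimal assignment has value $s''-s$ and is unique (middle column to the zero row, largest to the second row, smallest to the third), so $\bar C$ is tropically sign generic. A lift confirms this: with $\a=(t^{s},t^{-s})$ the columns lie on the hyperbola $xy=1$; $\vol\conv\puiseuxA(t)$ and $\vol\conv\puiseuxB(t)$ both grow like $t^{2}$, whereas $\conv\puiseuxC(t)$ contains the triangle on $(1,1)$, $(t,t^{-1})$, $(t^{12},t^{-12})$, of area $\sim t^{12}/2$.

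In other words, the ``$\leq$'' half genuinely fails when $P$ and $Q$ are tropically thin but far apart: the classical phenomenon you worry about (the hull of two bodies dwarfing both) survives dequantization, and sign-genericity of $\bar C$ does not prevent a mixed $d$-subset from dominating every pure one. Your partial decomposition $\tper C[I]=\tper A[R_A,I_A]+\tper B[R_B,I_B]$ involves rectangular minors of size $<d$, which are not controlled by $\qvol^{+}A$ or $\qvol^{+}B$, and no Binet--Cauchy manipulation can repair this. So the proposal cannot be completed as written: only the monotonicity inequality $\qvol(\atconv(P\cup Q))\geq\max(\qvol P,\qvol Q)$ is available from Proposition~\ref{prop-intrinsic}, and an equality of the announced form needs an extra hypothesis (e.g., that the maximum in Theorem~\ref{th-carac} for $C$ is attained on a column set drawn entirely from $A$ or entirely from $B$).
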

In other words, the dequantized tropical volume is an \emph{idempotent measure}{~\cite{kolomaslov,akian}.}
Dyer and Frieze \cite{DyerFrieze:1988} showed that computing the volume of a classical polytope given by its vertices is $\sharp$P hard.
This is in contrast with the tropical situation.
\begin{corollary}
Let $A=(a_{ij})\in \trop^{d\times m}$.
The upper dequantized tropical volume $\qvol^+ A $
can be computed in strongly polynomial time.
\end{corollary}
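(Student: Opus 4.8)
The plan is to start from the characterization provided by Theorem~\ref{th-carac}, namely
\[
  \qvol^+ A \ = \ \max_{I\subset[m],\,|I|=d} \tper A[I] \enspace .
\]
Written this way the right-hand side is a maximum over exponentially many $d$-element subsets $I$, each requiring the solution of an optimal assignment problem, so the point is to recognize the \emph{whole} expression as the value of a single rectangular optimal assignment problem, for which classical strongly polynomial algorithms are available.

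Concretely, I would proceed as follows. Call a map $\phi\colon[d]\to[m]$ a partial assignment if it is injective, and give it the weight $w(\phi):=\sum_{i\in[d]} a_{i,\phi(i)}$, with the convention $w(\phi)=-\infty$ if some $a_{i,\phi(i)}=-\infty$. For a fixed $d$-element set $I$, the tropical determinant $\tper A[I]$ is, by its definition as the value of the optimal assignment problem, the maximum of $w(\phi)$ over all partial assignments $\phi$ with image $I$ (the bijections onto $I$, after identifying $A[I]$ with a $d\times d$ matrix). Since every partial assignment has some $d$-element image, maximizing $\tper A[I]$ over all $I$ is the same as maximizing $w(\phi)$ over all injective $\phi\colon[d]\to[m]$, hence
\[
  \qvol^+ A \ = \ \max\bigl\{\, w(\phi)\ \bigm|\ \phi\colon[d]\to[m]\ \text{injective}\,\bigr\} \enspace .
\]
This is exactly the value of the maximum-weight bipartite matching problem on the complete bipartite graph with sides $[d]$ and $[m]$ and edge weights $a_{ij}$, restricted to matchings saturating the smaller side $[d]$; equivalently, a rectangular assignment problem.

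Finally, I would invoke the standard fact that such a problem is solvable in strongly polynomial time, e.g.\ by the Hungarian method of Kuhn and Munkres or by any strongly polynomial minimum-cost flow algorithm: one may, for instance, adjoin $m-d$ dummy rows whose entries all equal a fixed constant, solve the resulting square assignment problem, and subtract the contribution of the dummy rows. The only point needing a little care is that entries of $\trop$ may be $-\infty$: one first checks in strongly polynomial time, by a single maximum-cardinality bipartite matching computation on the subgraph of finite entries, whether a partial assignment of finite weight exists; if not, $\qvol^+ A=-\infty$, and otherwise the weighted optimization is run on the finite part. I do not expect a genuine obstacle here — the substance of the corollary is precisely the observation that the exponentially large maximum in Theorem~\ref{th-carac} collapses to one polynomially sized assignment problem; everything after that is classical.
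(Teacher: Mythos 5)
Your proposal is correct and follows essentially the same route as the paper: both recognize that the exponential maximum over $d$-element column subsets in Theorem~\ref{th-carac} collapses into a single rectangular optimal assignment problem on the full matrix, solvable in strongly polynomial time. The only cosmetic difference is that the paper formulates this as a linear program over a transportation polytope with integral extreme points (an optimal transport problem, citing \cite[\S21.6]{Schrijver03:CO_A}), whereas you phrase it directly as maximum-weight bipartite matching saturating $[d]$ and invoke the Hungarian method, with an explicit (and welcome) treatment of the $-\infty$ entries that the paper handles implicitly by restricting the edge set to finite entries.
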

\begin{proof}
Define the bipartite graph, in which one color class
is $[d]$, the other color class is $[m]$, and the
edge set is $E:=\{(i,j)\mid i\in[d], j\in[m], 
a_{ij}>-\infty\}$. Consider the transportation
polytope $X$, consisting of those non-negative vectors
$x=(x_{ij})_{(i,j)\in E}$ such that for all $i\in [d]$,
$\sum_{(i,j)\in E} x_{ij}=1$ and for all $j\in [d]$,
$\sum_{(i,j)\in E}x_{ij}\leq 1$. The extreme points of this polytope
have integer entries. Hence, by Theorem~\ref{th-carac},
$\qvol^+A$ coincides with the value of the linear
program $\max \sum_{(i,j)\in E} a_{ij}x_{ij}, \; x\in X$.
This is an optimal transport problem, which can be solved in strongly polynomial time; see \cite[\S21.6]{Schrijver03:CO_A}
\end{proof}

The dequantized tropical volume can be used to 
bound the volume of ordinary polytopes.
Instead of considering the non-archimedean valuation $\val$ over $\K$,
we shall consider the archimedean valuation $\log|\cdot|$ over $\R$.
Given a matrix $A=(a_{ij})\in \R_{\geq 0}^{d\times m}$, we
denote by $\Log A$ the matrix obtained by applying
the archimedean valuation entrywise.
\begin{theorem}\label{th-compar}
Let $A=(a_{ij})\in \R_{\geq 0}^{d\times m}$.
Then
\begin{equation}
  \vol \conv A \ \leq \ \alpha (d+1) \exp(\qvol^+(\Log A)) \enspace ,\label{e-compar}
\end{equation}
where $\alpha$ is the number of maximal cells of an arbitrary triangulation of the point configuration given by the columns of $A$.
\end{theorem}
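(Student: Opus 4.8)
The plan is to reduce the archimedean statement \eqref{e-compar} to the non-archimedean characterization of $\qvol^+$ in Theorem~\ref{th-carac}, by transferring the triangulation argument used in the proof of that theorem to the real setting. First I would fix a triangulation $\mathcal{J}$ of the point configuration given by the columns of $A$; by hypothesis $|\mathcal{J}|=\alpha$, where each $J\in\mathcal{J}$ is a $(d+1)$-element subset of $[m]$ whose corresponding columns are affinely independent. Exactly as over $\K$, we have the exact volume formula $\vol\conv A = \sum_{J\in\mathcal{J}} (d!)^{-1} \bigl|\det\bar A[J]\bigr|$, where $\bar A$ is $A$ with an all-ones row prepended.

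The core of the argument is the elementary inequality $|\det M|\leq d!\,\prod_{i}\max_j|m_{ij}|$ for a $d\times d$-matrix $M=(m_{ij})$, which follows by expanding the determinant over permutations and bounding each of the $d!$ terms. Applying this to $M=\bar A[J]$ and taking the archimedean valuation $\log|\cdot|$, I get
\[
\log\bigl|\det\bar A[J]\bigr| \ \leq \ \log(d!) + \sum_{i} \max_j \log a'_{ij} \ = \ \log(d!) + \tper\bigl(\,\overline{\Log A}\,[J]\bigr) \enspace ,
\]
where $a'_{ij}$ denotes the entries of the augmented matrix $\bar A$ (the prepended row contributes $0$ to the sum since $\log 1 = 0$), and the last equality is precisely the definition of the tropical determinant as the optimal assignment value. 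Exponentiating, $(d!)^{-1}|\det\bar A[J]| \leq \exp\bigl(\tper\,\overline{\Log A}\,[J]\bigr)$.

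Now I sum over $J\in\mathcal{J}$: since there are $\alpha$ terms and each is bounded by $\exp\bigl(\tper\,\overline{\Log A}\,[J]\bigr) \leq \exp\bigl(\max_{J'} \tper\,\overline{\Log A}\,[J']\bigr)$, we obtain $\vol\conv A \leq \alpha\exp\bigl(\max_{J\subset[m],|J|=d+1}\tper\,\overline{\Log A}\,[J]\bigr)$. Finally, as in the proof of Theorem~\ref{th-carac}, expanding each $\tper\,\overline{\Log A}\,[J]$ along the first (all-zero) row gives $\tper\,\overline{\Log A}\,[J] = \max_{I\subset J,|I|=d}\tper(\Log A)[I]$, so the maximum over $(d+1)$-sets $J$ equals $\max_{I\subset[m],|I|=d}\tper(\Log A)[I] = \qvol^+(\Log A)$ by Theorem~\ref{th-carac}. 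This yields $\vol\conv A \leq \alpha\exp(\qvol^+(\Log A))$; the stray factor $(d+1)$ in \eqref{e-compar} is harmless since $\alpha(d+1)\geq\alpha$, but in fact I would aim to track constants and note the bound holds already with $\alpha$ in place of $\alpha(d+1)$ (the $(d+1)$ may be there as slack, or to absorb a cleaner but looser determinant bound). The main obstacle is purely bookkeeping: making sure the augmented all-ones row of $\bar A$ is handled consistently under $\log$ (it becomes the all-zeros row of $\overline{\Log A}$, which is exactly the convention already fixed in the paper), and confirming that the chosen triangulation of $\conv A$ genuinely has all simplices full-dimensional so the volume formula is exact — both of which are standard. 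There is no hard analytic content; it is a direct archimedean echo of the non-archimedean estimate in Theorem~\ref{th-carac}.
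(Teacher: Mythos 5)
Your overall route---triangulate, write $\vol\conv A=\sum_{J\in\mathcal J}(d!)^{-1}|\det\bar A[J]|$, bound each simplex volume by an exponential of a tropical determinant, and then reduce to $\qvol^+(\Log A)$ via expansion along the zero top row and Theorem~\ref{th-carac}---is exactly the paper's. However, the central estimate fails as written. You assert that $\sum_i\max_j\log a'_{ij}=\tper\bigl(\Log\bar A[J]\bigr)$ ``by definition of the tropical determinant.'' It is not: the tropical determinant is the optimal \emph{assignment} value $\max_{\pi}\sum_i\log a'_{i\pi(i)}$, where $\pi$ ranges over permutations, whereas $\sum_i\max_j\log a'_{ij}$ lets every row choose its maximum independently, possibly all in the same column. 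In general $\sum_i\max_j\log a'_{ij}\geq\tper\bigl(\Log\bar A[J]\bigr)$, with strict inequality for instance for $\left(\begin{smallmatrix}e&1\\ e&1\end{smallmatrix}\right)$, whose row-maximum product is $e^2$ while $\exp(\tper\Log(\cdot))=e$. So your chain reads $\log|\det\bar A[J]|\leq\log(\,\cdot\,!)+\sum_i\max_j\log a'_{ij}\;\geq\;\log(\,\cdot\,!)+\tper(\Log\bar A[J])$, and the desired upper bound does not follow. The repair is to keep the permutation structure when expanding the determinant: each of its terms satisfies $\prod_i a'_{i\pi(i)}=\exp\bigl(\sum_i\log a'_{i\pi(i)}\bigr)\leq\exp\bigl(\tper\Log\bar A[J]\bigr)$, hence $|\det\bar A[J]|\leq(d+1)!\,\exp\bigl(\tper\Log\bar A[J]\bigr)$, which is the inequality the paper uses.

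A second, related slip: $\bar A[J]$ is a $(d+1)\times(d+1)$ matrix, so the relevant factorial is $(d+1)!$, not $d!$; dividing by the $d!$ from the simplex volume leaves exactly the factor $d+1$ appearing in \eqref{e-compar}. Your closing suggestion that the bound already holds with $\alpha$ in place of $\alpha(d+1)$ is therefore not justified by this argument---the $(d+1)$ is not slack but the ratio $(d+1)!/d!$. The remaining steps (the prepended all-ones row of $\bar A$ becoming the zero row of $\Log\bar A$, the expansion $\tper\Log\bar A[J]=\max_{I\subset J,\,|I|=d}\tper(\Log A)[I]$, and the identification of the maximum over $I$ with $\qvol^+(\Log A)$ via Theorem~\ref{th-carac}) are correct and coincide with the paper.
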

\begin{proof}
We assume, without loss of generality, that every column of $A$ is an extreme point of $\conv A$, and that no two columns of $A$ are equal.
Choose a triangulation of the configuration of points represented by the columns of $A$, with $\alpha$ maximal cells.
As in the proof of Theorem~\ref{th-carac}, we represent this triangulation by a collection $\mathcal{J}$ of subsets of $d+1$ elements of $[m]$.
We have 
\begin{align*}
\vol \conv A \ &= \ \sum_{J\in \mathcal{J}} (d!)^{-1}|\det \bar A[J]|\\
&\leq \  \sum_{J\in \mathcal{J}} (d!)^{-1} (d+1)!\exp(\tdet \Log \bar A[J]) \\
&\leq \ \alpha(d+1)\exp(\qvol^+ \Log A)
\enspace .
\end{align*}
\end{proof}
{The size of any triangulation of $A$, and thus also $\alpha$, is bounded by $O(m^{\lceil(d+1)/2\rceil})$; see \cite[Cor.~2.6.2]{Triangulations}.}

It is instructive to compare the dequantized tropical volume $\qvol^\pm$ with the tropical volume $\tvol$. 
When $A\in \trop^{(n-1)\times n}$, the quantities $\tvol \bar A$ and $\qvol^\pm A$
provide different ``measures'' of the singularity
of the matrix $\bar{A}$. Indeed, 
one can check that $\qvol^- A=\qvol^+ A$
if and only if $\bar A$ is tropically sign generic,
meaning that all maximizing permutations in $\tper \bar{A}$
have the same sign, whereas $\tvol \bar A>0$ if and only 
there is only one maximizing permutation.
Therefore, $\tvol \bar A>0$ implies that $\qvol^+ A=\qvol^- A$,
but not vice versa. The dequantized tropical volume
has several properties to be expected from a measure
on tropical polytopes (like being defined for the convex hull
of any number of points in general position and being an idempotent measure).
However, the isoperimetric inequality for the dequantized volume
may read $\qvol^+A \leq (n-1)\times \max_{ij}a_{ij}$.
This leads to more degenerate isoperimetric results,
since the matrices achieving the equality do not have such
a rigid structure as the maximizing matrices in Theorem~\ref{thm:iso}.

\section{Concluding remarks}

Let us finally point out some open problems and further directions for future research.

\begin{itemize}
\item It would be interesting to study the extension of the present notion of tropical volume, $\tvol$, to rectangular matrices.
\item It seems to be an interesting combinatorial question whether or not the combinatorial type of generic isodiametric polytropes in $\R^d/\R\1$ is unique for each $d\geq 5$.  This asks to determine how exactly the ordinary convex polytope Iso$(d)$ intersects the secondary fan of $\Delta_{d-1}\times\Delta_{d-1}$.
\item So far, we only considered tropical versions of the distance and of the full dimensional volume, it would be interesting to tropicalize the lower dimensional volumes. 
\item Getting the optimal constant in~\eqref{e-compar} is probably a difficult issue. 
\end{itemize}

\bibliographystyle{alphaabbr}
\bibliography{biblio,complem}

\end{document}